\newtheorem{proposition}{Proposition}
\newtheorem{lemma}{Lemma}
\newtheorem{theorem}{Theorem}
\newtheorem{corollary}{Corollary}
\theoremstyle{definition}
\newtheorem{definition}{Definition}
\theoremstyle{remark}
\newtheorem {remark}{Remark}
\def\BG{{\mathbb G}}
\def\BK{{\mathbb K}}
\def\BG{{\mathbb G}}
\def\BK{{\mathbb K}}
\def\BN{{\mathbb N}}
\def\BP{{\mathbb P}}
\def\BA{{\mathbb A}}
\def\Soc{\mathrm{Soc}}
\def\mf{\mathfrak{m}}
\def\Ann{\mathrm{Ann}}
\title{Additive actions on projective hypersurfaces\\ with a~finite number of orbits}
\thanks{The work was supported by the grant RSF \texttt{23-71-01100}}
\author{Viktoriia Borovik}
\email{vborovik@uni-osnabrueck.de}
\address{Osnabr\"uck University,
Fachbereich Mathematik/Informatik
Albrechtstr.~28a,
49076 Osnabrück, Germany.}
\author{Alexander Chernov}
\email{chenov2004@gmail.com}
\address{Lomonosov Moscow State University, Faculty of Mechanics $\&$ Mathematics, Department of Higher Algebra, Leninskie Gory~1, 119991 Moscow, Russia; \newline
$\&$ Higher School of Economics, Faculty of Computer Science, Pokrovsky Boulvard 11, 109028 Moscow, Russia.}
\author{Anton Shafarevich}
\email{shafarevich.a@gmail.com}
\address{
Moscow Center for Fundamental and Applied Mathematics, Moscow, Russia; \newline
$\&$
Lomonosov Moscow State University, Faculty of Mechanics $\&$ Mathematics, Department of Higher Algebra, Leninskie Gory~1, 119991 Moscow, Russia; \newline
$\&$ Higher School of Economics, Faculty of Computer Science, Pokrovsky Boulvard 11, 109028 Moscow, Russia.}
\subjclass[2020]{Primary 14L30, 14J70; Secondary 13E10.}
\keywords{Algebraic variety, algebraic group, unipotent group, algebraic group actions, open orbit, additive action, local algebra, projective hypersurface}
\begin{document}
\maketitle

\begin{abstract}
An induced additive action on a projective variety $X \subseteq \mathbb{P}^n$ is a regular action of the group $\mathbb{G}_a^m$ on $X$ with an open orbit, which can be extended to a regular action on the ambient projective space~$\mathbb{P}^n$. In this work, we classify all projective hypersurfaces admitting an induced additive action with a finite number of orbits.

\end{abstract}

\section{Introduction}

Let $\BK$ be an algebraically closed field of zero characteristic. By a variety or an algebraic group we always mean an algebraic variety or an algebraic group over $\BK$. By open and closed subsets of algebraic varieties we always mean open and closed subsets in Zariski topology. We denote by $\BG_a = (\BK, +)$ the additive group of the ground field and by $\BG_a^m$ the group $$\BG_a^m = \underbrace{\BG_a \times \cdots \times \BG_a}_\text{$m$ times}.$$
\begin{definition}
An \emph{additive action} on an algebraic variety $X$ is a regular effective action of~$\BG_a^m$ on $X$ with an open orbit. By an \emph{induced additive action} on an embedded projective algebraic variety $X \subseteq \BP^n$ we mean a regular effective action of $\BG_a^m$ on $\BP^n$ such that the variety $X$ is the~closure of an orbit of~$\BG_a^m$.
\end{definition}
Not every additive action on a projective variety is induced. An example can be found in~\cite[Example 1]{AP}. However, when the projective variety $X\subseteq \BP^n$ is normal and linearly normal, then every additive action of $\BG_a^m$ on $X$ lifts to the regular effective action of $\BG_a^m$ on the projective space $\BP^n$.

In \cite{HT} a remarkable correspondence between additive actions on the projective space~$\BP^n$ and local algebras of dimension $n+1$ was obtained. By a \emph{local algebra} we mean a commutative associative algebra over $\BK$ with a unit and a unique maximal ideal. We will recall this correspondence in Section~\ref{prem}. A correspondence between actions of arbitrary commutative algebraic groups on $\BP^n$ with an open orbit and associative commutative algebras with a unit element of dimension $n+1$ was established in \cite{KL}.

 The systematic study of additive actions on projective and complete varieties was initiated in \cite{Sh, AS, AP}. There are several results on additive actions on projective hypersurfaces. For example, it was proven in \cite{Sh} that  there is a unique additive action on a non-degenerate quadric. This result was generalized in \cite{BG}, where actions of arbitrary algebraic commutative groups on non-degenerate quadrics with an open orbit were described. In  \cite{AP} and \cite{AS} induced actions on projective hypersurfaces were studied. It was proven in \cite{AZ} that a non-degenerate hypersurface (see Definition \ref{nond}) admits at most one additive action. When a degenerate hypersurface admits an additive action, then there are at least two non-isomorphic additive actions on it, see \cite{Be}. For additive actions on degenerate hypersurfaces we refer  also to \cite{Li}.

Flag varieties admitting an additive action were classified in \cite{A} and all additive actions on flag varieties were classified in \cite{De}. Additive actions on toric varieties were studied in \cite{AR, APS, Ds1, Ds2, Sk, Sha, Sha2}. There are results on additive actions on Fano varieties in \cite{DL, DL2, BH, BM, ZM}. For a detailed review of the results on additive actions we refer to \cite{AZ}.

Among actions of algebraic groups on algebraic varieties, actions with a finite number of orbits are of particular interest. For example, toric varieties can be characterized as varieties on which an algebraic torus acts with a finite number of orbits. Spherical varieties admit an action of a reductive group with a finite number of orbits. Additive actions with a finite number of orbits on complete varieties, with an additional condition on the actions of one-dimensional subgroups, were described in the work \cite{CC}; see also Section \ref{LP}.

In this paper we find all projective hypersurfaces admitting an induced additive action with a~finite number of orbits. We use the technique developed in \cite{AP, AS, AZ, Sh}, generalizing the correspondence from \cite{HT, KL}. Each hypersurface with an induced additive action corresponds to a pair $(A,U)$, where $A$ is a local algebra with the maximal ideal $\mf$ and $U$ is a subspace in $\mf$ of codimension 1 generating $A$ as an algebra with a unit. We classify all such pairs $(A, U)$ that correspond to hypersurfaces admitting an~induced additive action with a~finite number of orbits, see Theorem \ref{maintheor}. By a pair $(A,U)$, one can find an equation defining the hypersurface using \cite[Theorem 2.14]{AZ}.

Our final results are stated in Theorem \ref{maintheor} and Corollary \ref{finalcor}. Geometrically they mean the following.

\begin{itemize}
    
    \item[a) ] There is exactly one curve in $\BP^2$ which admits an induced additive action with a finite number of orbits. It is $\BP^1$ embedded in $\BP^2$ via Veronese embedding of degree 2.
    \item[b) ] There are exactly three surfaces in $\BP^3$ which admit an induced additive action with a finite number of orbits. They are 
    \begin{enumerate}
        \item $\BP^1\times \BP^1$ embedded in $\BP^3$ via Segre embedding;
        \item the non-degenerate cubic;
        \item the degenerate hypersurface of degree 2 which is the projective cone over the hypersurface from the point a).
    \end{enumerate}
    \item[c) ] When $n>3$, there are exactly two hypersurfaces in $\BP^n$ which admit an induced additive action with a finite number of orbits. One of them is a non-degenerate hypersurface $X_n$ of degree $n$. The other one is a degenerate hypersurface $Y_{n-1}$ of degree $n-1$ which is the projective cone over $X_{n-1}.$

\end{itemize}

The structure of the text is as follows. In Section \ref{prem} we recall known facts about additive actions on projective space and projective hypersurfaces. In Section \ref{MR} we prove the main results and find all projective hypersurfaces that admit an additive action with a finite number of orbits. Finally, in Section \ref{PROP} we discuss properties of obtained hypersurfaces such as the structure of orbits, smoothness, normality and the total number of different additive actions.

\section*{Acknowledgments.} The authors are grateful to the anonymous reviewer for careful reading and valuable suggestions for improving the text.

\section{Additive actions on projective varieties}\label{prem}

In this section we recall some  facts on additive actions on projective varieties. We say that two induced additive actions on a projective variety $X \subseteq \BP^n$ are \emph{equivalent} if one is obtained from the other via an automorphism of $\BP^n$ preserving $X$.

\begin{proposition}\cite[Proposition 2.15]{HT}\label{hassett}\label{hassett}
There is a one-to-one correspondence between

\begin{enumerate}

     \item equivalence classes of additive actions on $\BP^n$; 
    \item isomorphism classes of local algebras of dimension $n+1$.
\end{enumerate}
\end{proposition}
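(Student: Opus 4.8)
The plan is to construct the two maps of the correspondence explicitly and then check that they are mutually inverse and compatible with the equivalence and isomorphism relations. Note first that an additive action on $\BP^n$ is necessarily an action of $\BG_a^n$, since the open orbit has dimension $n$ and a commutative unipotent group acting effectively with an open orbit acts simply transitively there.

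\emph{From a local algebra to an action.} Let $A$ be a local $\BK$-algebra with $\dim_\BK A = n+1$ and maximal ideal $\mf$, so that $A = \BK\cdot 1 \oplus \mf$ with $\dim_\BK \mf = n$. Since $A$ is finite-dimensional and local, $\mf$ is nilpotent, and in characteristic zero $\exp\colon (\mf,+) \to (1+\mf,\cdot)$ is a well-defined isomorphism of algebraic groups, with $(\mf,+)\cong\BG_a^n$. I would let this group act on $\BP(A)\cong\BP^n$ by $m\cdot[a] = [\exp(m)\,a]$, using the multiplication of $A$. The orbit of $[1]$ is $\{[\exp(m)] : m\in\mf\} = [1+\mf]$, which is exactly the affine chart of $\BP(A)$ where the coefficient of $1$ is nonzero; hence it is open and dense, and comparing the $\BK$-components of $\exp(m)$ shows the action is effective. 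This yields an additive action on $\BP^n$.

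\emph{From an action to a local algebra.} Given an additive action of $\BG_a^n$ on $\BP^n$, I would lift it to a linear representation $\rho\colon\BG_a^n\to\GL(V)$ on $V=\BK^{n+1}$, the cone over $\BP^n$. Such a lift exists because $\BG_a^n$ is a simply connected unipotent group with no nontrivial characters, so the induced morphism to $\Aut(\BP^n)=\mathrm{PGL}_{n+1}$ lifts along the central extension by $\BK^\times$; the image $\rho(\BG_a^n)$ is then a commutative unipotent subgroup. Choosing $v\in V$ over a point of the open orbit, I would set $R\subseteq\mathrm{End}(V)$ to be the unital commutative associative subalgebra generated by $\rho(\BG_a^n)$, equivalently by $\mathrm{id}$ and the nilpotent operators $d\rho(\lie\BG_a^n)$. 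The crux is to show that the evaluation map $\mathrm{ev}_v\colon R\to V$, $\phi\mapsto\phi(v)$, is a linear isomorphism. Injectivity follows from commutativity: if $\phi(v)=0$ then $\phi(\rho(g)v)=\rho(g)\phi(v)=0$ for all $g$, so $\phi$ vanishes on the orbit of $v$, which is dense in $V$ since its image in $\BP^n$ is open; hence $\phi=0$. Surjectivity holds since $\mathrm{ev}_v(R)=R\cdot v$ is a linear subspace of $V$ containing the dense orbit, hence all of $V$. Transporting the algebra structure of $R$ along $\mathrm{ev}_v$ makes $V$ into a commutative associative unital algebra $A$ with unit $v$, whose maximal ideal is the image of the nilpotent part of $R$ and with $A/\mf\cong\BK$, so $A$ is local of dimension $n+1$.

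Finally I would verify compatibility and that the constructions are mutually inverse: an algebra isomorphism is a linear map inducing an element of $\Aut(\BP^n)$ intertwining the actions, so isomorphic algebras give equivalent actions, while an equivalence of actions conjugates the representations and carries cyclic vector to cyclic vector, hence induces an algebra isomorphism. Starting from $A$, the representation is the regular representation of $1+\mf$ on $A$ with cyclic vector $1$, and $R$ is recovered as the multiplication operators, which return $A$; starting from an action, the reconstruction gives back the same action up to equivalence. The main obstacle is the passage from action to algebra: establishing the linear lift $\rho$ and proving that the cyclic-vector construction yields a local algebra that is independent of the choices of lift and of $v$ in the open orbit. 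Once the evaluation isomorphism is in hand, the remaining verifications are routine bookkeeping.
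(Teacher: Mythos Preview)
The paper does not actually prove this proposition; it is quoted from \cite{HT}, and the text that follows only \emph{recalls} the construction in one direction (from a local algebra $A$ to an additive action on $\BP(A)$ via $m\circ a=\exp(m)\cdot a$). Your algebra-to-action construction matches that description exactly, and your action-to-algebra direction is the standard cyclic-vector argument from Hassett--Tschinkel, so your sketch is correct and in fact supplies more than the paper itself does.

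One small imprecision worth fixing: when you argue injectivity of $\mathrm{ev}_v$, you say the orbit $\rho(\BG_a^n)v$ is ``dense in $V$''. It is not (it is $n$-dimensional in an $(n{+}1)$-dimensional space); what is true, and sufficient, is that this orbit \emph{linearly spans} $V$, since its projectivization is Zariski-open in $\BP(V)$ and $\phi$ is linear. The same observation is what makes surjectivity work: $R\cdot v$ is a linear subspace containing the orbit, hence all of $V$.
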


We now recall how to construct an additive action on~$\BP^n$ by an $(n+1)$-dimensional local algebra $A$. Let $\mf$ be the maximal ideal in $A$. Then $A = \BK \oplus \mf$ (a direct sum of  vector spaces) and all elements in the ideal $\mf$ are nilpotent. This is a well-known fact, for the proof we refer to \cite[Lemma 1.2]{AZ}.
Consider the exponential map on $\mf$:
$$m \mapsto \exp(m) = \sum_{i\geq 0} \frac{m^i}{i!}, \; \text{ for }m \in  \mf.$$
This map is well-defined on $\mf$. The additive group of $\mf$ is isomorphic to $\BG_a^n$ and $\mf$ acts on the algebra~$A$ by the following rule:
$m \circ a = \exp(m) \cdot a$.
This is an algebraic action. The stabilizer of a unit is trivial, so we have the following isomorphisms of algebraic varieties
$$\BA^n \simeq\BG_a^n \simeq \mathrm{exp}(\mf) \cdot 1 = 1 + \mf,$$
where the last equality is satisfied since the map
$$1+m \mapsto \ln(1+m) = \sum_{i>0} (-1)^{i-1}\frac{m^i}{i}, \; \text{ for }m \in  \mf,$$
is well-defined on $1+\mf$ and $\exp(\ln(1+m)) = 1+ m$.
The action of $\mf$ on the algebra $A$ defines an algebraic action of $\BG_a^n$ on the projective space $  \BP^{n} = \BP(A)$ by the rule
$$m \circ p(a) = p(\exp(m) \cdot a),$$
where the map $p\colon A\setminus \{0\} \to \BP(A)$ is the canonical projection. The orbit of $p(1)$ is the open orbit, so this defines an additive action on $\BP^n$. See \cite[Example 1.50]{AZ} for further examples of this~construction.

\begin{proposition}\cite[Proposition 3]{AP}\label{APprop}
There is a one-to-one correspondence between
\begin{enumerate}\label{prophyp}
    \item equivalence classes of pairs $(X, \alpha),$ where $X$ is a hypersurface in $\mathbb{P}^n$ and $\alpha$ is an induced additive action on $X$; 
    \item isomorphism classes of pairs $(A, U)$, where $A$ is a local $(n+1)$-dimensional algebra with the maximal ideal $\mathfrak{m}$ and $U$ is an $(n-1)$-dimensional subspace in $\mathfrak{m}$ that generates $A$ as an algebra with a unit. 
\end{enumerate}
\end{proposition}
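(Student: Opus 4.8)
The plan is to construct the correspondence in both directions and to check that the two constructions are mutually inverse and compatible with the respective equivalence relations, using the explicit Hassett--Tschinkel action recalled above.

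First I would treat the direction $(A,U)\mapsto(X,\alpha)$. Given a local $(n+1)$-dimensional algebra $A$ with maximal ideal $\mf$ and a subspace $U\subseteq\mf$ with $\dim U=n-1$ generating $A$ as a unital algebra, I would take the Hassett--Tschinkel action of $\BG_a^n=\exp(\mf)$ on $\BP^n=\BP(A)$ and restrict it to $\exp(U)$. Since $A$ is commutative, $\exp$ is a homomorphism $(\mf,+)\to(1+\mf,\times)$, so $\exp(U)$ is a closed subgroup isomorphic to $\BG_a^{n-1}$, and I would set $X=\overline{\exp(U)\cdot p(1)}$. The differential at $u=0$ of the orbit map $u\mapsto p(\exp(u))$ is the inclusion $U\hookrightarrow\mf=T_{p(1)}\BP(A)$, hence injective, so the orbit has dimension $n-1$ and $X$ is a hypersurface on which $\exp(U)$ acts with an open orbit. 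Effectiveness on $\BP^n$ is inherited from the faithful Hassett--Tschinkel action, and since the linear span of $\{\exp(u):u\in U\}$ equals the unital subalgebra generated by $U$, the hypothesis that $U$ generates $A$ is precisely the statement that $X$ spans $\BP^n$, which also yields effectiveness of $\exp(U)$ on $X$ itself.

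Next, for $(X,\alpha)\mapsto(A,U)$, write $V=\BK^{n+1}$, $\BP^n=\BP(V)$, and let $\Gamma\cong\BG_a^{n-1}$ be the acting group. The key preliminary step is to lift the homomorphism $\Gamma\to\Aut(\BP^n)=\GL(V)/\BK^\times$ to $\GL(V)$ with unipotent image; differentiating produces a commuting family of nilpotent operators $N_1,\dots,N_{n-1}$, linearly independent by effectiveness. I would then set $A=\BK[N_1,\dots,N_{n-1}]\subseteq\mathrm{End}(V)$, a commutative associative unital algebra that is local because it is generated by nilpotents. Choosing $v_0\in V$ representing a point of the open $\Gamma$-orbit, the span of that orbit equals $A\cdot v_0$, which is all of $V$ once $X$ spans $\BP^n$; hence $\mu\colon A\to V,\ r\mapsto r v_0$, is surjective. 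It is also injective: if $r v_0=0$ then $r(s v_0)=s(r v_0)=0$ for all $s\in A$ by commutativity, so $r$ annihilates $V=A v_0$ and $r=0$. Thus $\mu$ is a linear isomorphism, $\dim A=n+1$, and transporting the multiplication of $A$ through $\mu$ identifies $V$ with $A$, $v_0$ with the unit, and the action with the Hassett--Tschinkel action. I would finally put $U=\langle N_1 v_0,\dots,N_{n-1}v_0\rangle\subseteq\mf$, of dimension $n-1$ and generating $A$ since the $N_i$ do.

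To conclude, I would check that the two constructions are mutually inverse — beginning from $(A,U)$ the recovered operators are multiplications by a basis of $U$ with $v_0=1$, returning $(A,U)$ — and that equivalence of pairs $(X,\alpha)$ under automorphisms of $\BP^n$ preserving $X$ matches isomorphism of pairs $(A,U)$, by tracking how a linear change of coordinates on $V$ intertwines the operators and the cyclic vector. The main obstacle is the reverse direction: obtaining genuine commuting nilpotent operators (the lift from $\GL(V)/\BK^\times$ to $\GL(V)$ with commuting unipotent image) and proving that the algebra they generate is local of dimension exactly $n+1$ through the injectivity of $\mu$. This is also where the non-degeneracy hypothesis is essential, since a hypersurface contained in a hyperplane would force $A v_0$ to be a proper subspace and $\mu$ to fail surjectivity — precisely the failure of $U$ to generate $A$.
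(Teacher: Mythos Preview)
The paper does not prove this proposition; it is cited from \cite{AP}, and only the forward construction $(A,U)\mapsto(X,\alpha)$ is recalled in the paragraph following the statement. Your forward direction reproduces that construction verbatim, and your reverse direction---lifting the unipotent action to $\GL(V)$, generating the commutative local subalgebra $A\subseteq\mathrm{End}(V)$ from the resulting nilpotents, and identifying $V$ with $A$ via the cyclic vector $v_0$ in the open orbit---is exactly the standard argument from \cite{AP} and \cite[\S2]{AZ}. So the approach is correct and is the expected one.

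One point should be sharpened. You invoke a ``non-degeneracy hypothesis'' to obtain $Av_0=V$, but the statement carries no such hypothesis, and in this paper \emph{non-degenerate} (Definition~\ref{nond}) is strictly stronger than linear non-degeneracy. What your argument actually uses is that $X$ spans $\BP^n$, i.e.\ that $X$ is not a hyperplane. In the forward direction this is automatic: if $\mf^2\subseteq U$ then the unital subalgebra generated by $U$ equals $\BK\oplus U\subsetneq A$, contradicting the generation condition, so $\mf^2\not\subseteq U$ and hence $\deg X\geq 2$ by Theorem~\ref{degree}. Conversely, if $X$ were a hyperplane then for any $v_0$ in its open orbit the subspace $Av_0$ is contained in the corresponding hyperplane of $V$, so $\mu$ cannot be an isomorphism and no $H$-pair of the required dimensions results. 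Thus the bijection is really with hypersurfaces of degree at least $2$; you should state this explicitly rather than appeal to a hypothesis that is absent and whose name collides with a different notion used later in the paper.
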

The pairs $(A,U)$ from Proposition \ref{prophyp} are called  \emph{$H$-pairs}. We say that two $H$-pairs $(A_1, U_1)$  and $(A_2, U_2)$ are isomorphic if there is an isomorphism of local algebras $\varphi\colon A_1 \to A_2$ such that $\varphi(U_1) = U_2.$

Now we fix an $H$-pair $(A,U)$ until the end of the section. Let $\mf$ be the maximal ideal of~$A$. In Proposition \ref{prophyp}, the additive action on $X$ is defined as follows. We define the action of $\mf$ on $\BP(A)$ in the same way as in Proposition \ref{hassett}. Thus we restrict this action to the subgroup $U\simeq \BG_a^{n-1}$ and consider the subvariety
$$X = p(\overline{\exp(U) \cdot 1}).$$
Then $X$ is a hypersurface in $\BP(A) = \BP^n$ and the group $U$ acts on $X$ with an open orbit.

The following results illustrate how to find the defining equation and degree of $X$.

\begin{theorem}\cite[Theorem 5.1]{AS}\label{degree} The degree of the hypersurface $X$ is equal to the largest number $d \in \mathbb{N}$ such that $\mf^d \nsubseteq U$, where $\mf$ is the maximal ideal in the corresponding local algebra $A$. 
\end{theorem}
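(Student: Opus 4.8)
The plan is to produce the defining polynomial of $X$ explicitly and read off its degree. Throughout, write $A = \BK\cdot 1 \oplus \mf$ and decompose $\mf = U \oplus \BK w$ for some $w \notin U$, so that $U$ has codimension $1$ in $\mf$. Let $\xi\colon A \to \BK$ be the linear functional vanishing on $\BK\cdot 1 \oplus U$ and nonzero on $w$; then $\ker(\xi)\cap \mf = U$, so for $u \in \mf$ one has $u \in U$ if and only if $\xi(u) = 0$. I would work in the affine chart where the coefficient $c_0$ of $1$ is nonzero: writing a point as $[a]$ with $a = c_0\cdot 1 + m$, $m \in \mf$, the identity $\exp(\ln(1+m/c_0)) = 1 + m/c_0$ shows that $[a]$ lies in the open orbit $X^\circ = p(\exp(U)\cdot 1)$ if and only if $\ln(1 + m/c_0) \in U$, i.e.\ $\xi(\ln(1+m/c_0)) = 0$.

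Next I would expand this condition. Since $\ln(1+ m/c_0) = \sum_{i\ge 1}\frac{(-1)^{i-1}}{i}c_0^{-i}m^i$ and $m^i \in \mf^i \subseteq \mf^{d+1}\subseteq U = \ker(\xi)\cap\mf$ for every $i \ge d+1$, the functional $\xi$ kills all terms with $i > d$, and
\[
\xi\bigl(\ln(1+m/c_0)\bigr) = c_0^{-d}\,G(a), \qquad G(a) := \sum_{i=1}^{d}\frac{(-1)^{i-1}}{i}\,c_0^{\,d-i}\,\xi(m^i).
\]
Here $G$ is homogeneous of degree $d$ in the coordinates of $a$, since the $i$-th summand has degree $(d-i)+i = d$. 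Thus $X^\circ = \{[a] : c_0 \ne 0,\ G(a)=0\}$, and $X = \overline{X^\circ}$ is an irreducible component of $V(G)$. To see that $\deg G = d$ exactly, I would check that its part free of $c_0$, namely the top summand $\frac{(-1)^{d-1}}{d}\xi(m^d)$, is a nonzero form on $\mf$: by polarization in characteristic zero the form $m \mapsto \xi(m^d)$ vanishes identically if and only if $\xi$ vanishes on $\mf^d$, i.e.\ if and only if $\mf^d \subseteq U$, which is excluded by the choice of $d$. In particular $c_0 \nmid G$.

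Since $X^\circ$ is irreducible and $c_0 \nmid G$, the only component of $V(G)$ meeting $\{c_0 \ne 0\}$ is $X$, so $G = G_X^{\,a}$ for the irreducible polynomial $G_X$ defining $X$ and some $a \ge 1$, whence $\deg X = d/a$. The main obstacle is to rule out $a > 1$, i.e.\ to show that $G$ is reduced. For this I would restrict $G$ to a line $\ell(\lambda) = [\,1 + \lambda v\,]$ through the orbit point $[1]$, obtaining $G(\ell(\lambda)) = \sum_{i=1}^{d}\frac{(-1)^{i-1}}{i}\xi(v^i)\,\lambda^i$. Choosing $v \in \mf$ generic so that both $\xi(v) \ne 0$ and $\xi(v^d)\ne 0$ (possible, as each is a nonzero polynomial condition on $v$ by the previous paragraph), this is a polynomial of degree $d$ in $\lambda$ with a simple root at $\lambda = 0$. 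But $G(\ell(\lambda)) = G_X(\ell(\lambda))^a$ has every root of multiplicity divisible by $a$, so a simple root forces $a = 1$. Therefore $G$ is reduced, and $\deg X = \deg G = d$, which is the assertion.
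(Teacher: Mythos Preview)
The paper does not prove this statement itself; it is quoted from \cite[Theorem~5.1]{AS}, and the explicit equation of $X$ is likewise quoted as Theorem~\ref{equation} from \cite[Theorem~2.14]{AZ}. So there is no in-paper proof to compare against.

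Your argument is correct and essentially reconstructs both cited results in one pass: the polynomial $G$ you produce is exactly the expression $z_0^{\,d}\,\pi\bigl(\ln(1+z/z_0)\bigr)$ of Theorem~\ref{equation} (your functional $\xi|_{\mf}$ is the projection $\pi\colon\mf\to\mf/U$ up to a scalar), and your simple-root argument on a generic affine line through $p(1)$ is a clean way to certify that $G$ is reduced, so that $\deg X = d$ rather than a proper divisor of $d$. The one step you pass over quickly is the implication ``$c_0\nmid G$ and $V(G)\cap\{c_0\ne 0\}=X^\circ$, hence $G=G_X^{\,a}$'': it is worth saying explicitly that any irreducible component of $V(G)$ contained in the hyperplane $\{c_0=0\}$ would, being itself a hypersurface in $\BP(A)$, have to coincide with that hyperplane and force $c_0\mid G$; thus every component of $V(G)$ meets the open chart and therefore equals $X$. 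With that clause added, the proof is complete and in line with the approach underlying the cited references.
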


\begin{theorem}\cite[Theorem 2.14]{AZ}\label{equation}
    The hypersurface $X$ is given in $\BP(A)$ by the following homogeneous equation:
    $$z_0^d\pi\left(\ln\left(1 + \frac{z}{z_0} \right)\right) = 0,$$
    where $z_0\in \BK, \, z\in \mf$ and $\pi\colon \mf \to \mf/U\simeq \BK$ is the canonical projection.
\end{theorem}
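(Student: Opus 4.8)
The plan is to turn the right-hand expression into an honest homogeneous polynomial of degree $d$ on $\BP(A)$, show that it vanishes on the open orbit, and then match degrees to conclude that it cuts out exactly $X$. Fix a basis $e_1,\dots,e_n$ of $\mf$ and write a general element of $A$ as $z_0\cdot 1 + z$ with $z = \sum_j z_j e_j \in \mf$; the coefficients $z_0,z_1,\dots,z_n$ are then homogeneous coordinates on $\BP(A) = \BP^n$. Expanding the logarithm and using linearity of $\pi$, I would first record the identity
$$z_0^d\,\pi\!\left(\ln\!\left(1 + \tfrac{z}{z_0}\right)\right) = \sum_{i\ge 1}\frac{(-1)^{i-1}}{i}\,z_0^{\,d-i}\,\pi(z^i).$$
Here $\pi(z^i)$ is a homogeneous polynomial of degree $i$ in $z_1,\dots,z_n$, so each summand is homogeneous of degree $d$ in $(z_0,\dots,z_n)$, and multiplication by $z_0^d$ is precisely what clears the negative powers of $z_0$. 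Moreover, for $i\ge d+1$ we have $z^i\in\mf^i\subseteq\mf^{d+1}\subseteq U=\Ker\pi$ by the choice of $d$ in Theorem~\ref{degree}, so $\pi(z^i)=0$ and the sum truncates at $i=d$. Denote the resulting genuine polynomial by $F$, and let $V(F)=\{F=0\}\subseteq\BP(A)$ be its (well-defined, since $F$ is homogeneous) vanishing locus.

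Next I would verify that $F$ has degree exactly $d$ and vanishes on the open orbit. The top summand $i=d$ equals $\tfrac{(-1)^{d-1}}{d}\pi(z^d)$, which is a nonzero polynomial precisely because $\mf^d\nsubseteq U$, again by the definition of $d$; since it is the unique summand free of $z_0$, it cannot cancel against the others, whence $\deg F = d$ and $F\ne 0$. For a point of the open orbit, write it as $p(\exp(u))$ with $u\in U$; in the coordinates above it has $z_0=1$ and $z=\exp(u)-1$, so that $1+z/z_0 = \exp(u)$ and $\ln(1+z/z_0)=u\in U$. Applying $\pi$ gives $F(\exp(u)) = \pi(u)=0$. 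As $V(F)$ is closed and $X$ is the closure of the open orbit, this yields $X\subseteq V(F)$.

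Finally I would conclude by a dimension-and-degree comparison. The variety $X$ is an irreducible hypersurface (it is the closure of a single orbit), so its homogeneous ideal is generated by an irreducible polynomial $G$ with $\deg G = \deg X = d$ by Theorem~\ref{degree}. The inclusion $X\subseteq V(F)$ means that $F$ vanishes on $X$, hence $F\in I(X)=(G)$ and $G\mid F$; since $\deg F = d = \deg G$, we get $F = c\,G$ for a nonzero constant $c$, and therefore $V(F)=V(G)=X$. This identifies $F=0$ as a defining equation of $X$.

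I expect the genuinely delicate point to be the bookkeeping that turns the formal expression $z_0^d\,\pi(\ln(1+z/z_0))$ into a well-defined homogeneous polynomial of the correct degree --- in particular the truncation at $i=d$ and the non-vanishing of the top-degree term --- since everything else reduces to the standard fact that $\ln$ inverts $\exp$ on $1+\mf$ together with the degree formula already furnished by Theorem~\ref{degree}.
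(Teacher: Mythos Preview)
Your proof is correct. Note, however, that the paper does not itself prove this statement: it is quoted verbatim as \cite[Theorem~2.14]{AZ} and used as a black box, so there is no in-paper argument to compare against. Your argument is the natural one and is essentially the same as the proof in the cited source: expand the logarithm, use $\mf^{d+1}\subseteq U$ to truncate the sum to a genuine homogeneous polynomial of degree $d$, use $\mf^d\nsubseteq U$ to see that the top term $\tfrac{(-1)^{d-1}}{d}\pi(z^d)$ is nonzero (hence $\deg F=d$), check vanishing on the open orbit via $\ln\circ\exp=\mathrm{id}$ on $\mf$, and finish by comparing with the irreducible degree-$d$ generator of $I(X)$ furnished by Theorem~\ref{degree}. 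The only cosmetic point is that when you write ``$F(\exp(u))=\pi(u)=0$'' you are evaluating the dehomogenized polynomial at the affine representative with $z_0=1$; it may be worth saying this explicitly, but the computation is clear.
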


It is also possible to describe elements $a\in A$ such that $\pi(a) \in X.$

\begin{proposition}\cite[Corollary 2.18]{AZ}\label{complement}
The complement of the open $U$-orbit in $X$ is the set
$$\{ \, p(z)  \mid z \in \mf \text{ such that } z^d \in U \, \},$$
where $p\colon A\setminus\{0\} \to \BP(A)$ is the canonical projection and $d$ is the degree of $X$.
\end{proposition}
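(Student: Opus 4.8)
The plan is to read the boundary directly off the defining equation of $X$ supplied by Theorem \ref{equation}. Writing a general element of $A$ as $z_0\cdot 1 + z$ with $z_0\in\BK$ and $z\in\mf$, I would first turn the expression $z_0^d\,\pi(\ln(1+z/z_0))$ into an honest homogeneous polynomial. Expanding $\ln(1+z/z_0)=\sum_{i\ge 1}\frac{(-1)^{i-1}}{i}z_0^{-i}z^i$, multiplying by $z_0^d$, and applying the $\BK$-linear map $\pi$ gives $\sum_{i\ge 1}\frac{(-1)^{i-1}}{i}z_0^{\,d-i}\pi(z^i)$. The point that makes this a polynomial with no negative powers of $z_0$ is that $\mf^{d+1}\subseteq U$, which follows from the characterization of the degree in Theorem \ref{degree}: since $d$ is the largest integer with $\mf^{d}\nsubseteq U$, we have $z^i\in\mf^{i}\subseteq\mf^{d+1}\subseteq U$ and hence $\pi(z^i)=0$ for every $i>d$, so the sum terminates at $i=d$.

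The next step is to treat the two affine charts separately. On the chart $z_0\neq 0$ I normalize $z_0=1$, so a point is $p(1+z)$ and the equation becomes $\pi(\ln(1+z))=0$, i.e. $\ln(1+z)\in U$. Because $\exp$ and $\ln$ are mutually inverse bijections between $\mf$ and $1+\mf$, this is equivalent to $1+z=\exp(u)$ for some $u\in U$, that is, $p(1+z)$ lies in the open orbit $p(\exp(U)\cdot 1)$. Thus the part of $X$ lying in $\{z_0\neq 0\}$ is exactly the open orbit.

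On the chart $z_0=0$ I substitute $z_0=0$ into the homogeneous polynomial obtained above: every term with $i<d$ carries a strictly positive power $z_0^{\,d-i}$ and vanishes, leaving only the top term $\frac{(-1)^{d-1}}{d}\pi(z^d)$. Hence a point $p(z)$ with $z\in\mf\setminus\{0\}$ lies on $X$ if and only if $\pi(z^d)=0$, i.e. $z^d\in U$. Since the two charts are disjoint and the open orbit is contained in $\{z_0\neq 0\}$, the complement of the open orbit in $X$ equals $X\cap\{z_0=0\}=\{\,p(z)\mid z\in\mf,\ z^d\in U\,\}$, which is the asserted set.

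I expect the only genuine care to be in the first step: verifying that the formal symbol $z_0^d\,\pi(\ln(1+z/z_0))$ really is a polynomial and that its restriction to $z_0=0$ is precisely the single top-degree summand $\pi(z^d)$, the terminating of the logarithmic series being exactly the input provided by Theorem \ref{degree}. Once the polynomial is in hand, the chart-by-chart analysis together with the bijectivity of $\exp$ on $\mf$ makes the remainder routine.
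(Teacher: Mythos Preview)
The paper does not supply a proof of this proposition: it merely cites \cite[Corollary~2.18]{AZ} and uses the statement as input for the corollaries that follow. There is therefore no ``paper's own proof'' to compare against.

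That said, your argument is correct and self-contained. The only two ingredients you use are Theorem~\ref{equation} (the explicit equation of $X$) and Theorem~\ref{degree} (to guarantee $\mf^{d+1}\subseteq U$, which truncates the logarithmic series and makes the defining polynomial genuinely polynomial in $z_0$). Your chart-by-chart analysis is exactly right: on $\{z_0\neq 0\}$ the equation reduces to $\ln(1+z)\in U$, and the bijectivity of $\exp\colon\mf\to 1+\mf$ identifies this locus with the open $U$-orbit $p(\exp(U))$; on $\{z_0=0\}$ only the top term $\tfrac{(-1)^{d-1}}{d}\pi(z^d)$ survives, giving the condition $z^d\in U$. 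One minor point worth stating explicitly (you implicitly use it) is that the open orbit is entirely contained in $\{z_0\neq 0\}$, since $\exp(U)\subseteq 1+\mf$; this is what justifies identifying the complement of the open orbit with $X\cap\{z_0=0\}$.
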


\begin{corollary}\label{morb}
Suppose that the point $x \in X$ belongs to the complement of the open orbit of the group~$U$. Then the $\mf$-orbit of $x$ is contained in $X$.
\end{corollary}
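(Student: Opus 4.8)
The plan is to reduce the statement to the explicit description of the boundary furnished by Proposition~\ref{complement} and then to verify a short closure computation. First I would recall that the $\mf$-action on $\BP(A)$ is given by $m \circ p(a) = p(\exp(m)\cdot a)$, so that the $\mf$-orbit of a point $x = p(z)$ is precisely $\{\, p(\exp(m)\cdot z) \mid m \in \mf \,\}$. By Proposition~\ref{complement}, the hypothesis that $x$ lies in the complement of the open $U$-orbit lets me choose a representative $z \in \mf$ with $z^d \in U$, where $d$ is the degree of $X$. In particular every boundary point has zero constant part, i.e. lies in the hyperplane $\BP(\mf)$, and I would first observe that the whole $\mf$-orbit of $x$ stays inside $\BP(\mf)$: this is immediate because $\mf$ is an ideal, so $\exp(m)\cdot z \in \mf$, and because $\exp(m)$ is a unit with inverse $\exp(-m)$, so $\exp(m)\cdot z \neq 0$ and $p(\exp(m)\cdot z)$ is well defined.

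The heart of the argument is then to show that each point $p(\exp(m)\cdot z)$ again meets the membership criterion of Proposition~\ref{complement}, namely that $(\exp(m)\cdot z)^d \in U$. Using commutativity of $A$ I would write $(\exp(m)\cdot z)^d = \exp(m)^d\, z^d$, and note that $\exp(m)^d = 1 + v$ for some $v \in \mf$, since $\exp(m) = 1 + w$ with $w \in \mf$ and raising $1+w$ to the $d$-th power leaves the constant term equal to $1$. Hence
$$(\exp(m)\cdot z)^d = z^d + v\,z^d.$$
Now $z^d \in U$ by hypothesis, and the corrective term satisfies $v\,z^d \in \mf\cdot\mf^d = \mf^{d+1}$. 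By Theorem~\ref{degree}, $d$ is the largest integer with $\mf^d \nsubseteq U$, so $\mf^{d+1}\subseteq U$, whence $v\,z^d \in U$ and therefore $(\exp(m)\cdot z)^d \in U$. Applying Proposition~\ref{complement} once more gives $p(\exp(m)\cdot z)\in X$.

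Since $m \in \mf$ is arbitrary, this shows the entire $\mf$-orbit of $x$ lies in $X$, which is the claim. I do not anticipate a serious obstacle; the only step requiring care is pinpointing where the degree bound enters, namely that multiplying $z^d$ by any element of $\mf$ lands in $\mf^{d+1}\subseteq U$. This is exactly the mechanism that keeps the orbit on the boundary hypersurface instead of sweeping it into the open part of $\BP^n$, and it is the place where the hypothesis $x \notin U\!\cdot\!p(1)$ (through the condition $z^d\in U$) is genuinely used.
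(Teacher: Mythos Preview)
Your proof is correct and follows essentially the same route as the paper: starting from Proposition~\ref{complement}, you compute $(\exp(m)z)^d = z^d + (\text{element of }\mf)\cdot z^d \in z^d + \mf^{d+1} \subseteq U$ and then invoke Proposition~\ref{complement} again. The paper's version is more compressed (it writes the orbit as $z + z\mf$ directly, using $\exp(\mf)=1+\mf$, and then $(z+z\mf)^d \subseteq z^d + \mf^{d+1}\subseteq U$), but the underlying computation is identical.
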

\begin{proof}
    Let us take $z \in \mf$ such that $p(z) = x$ lies in $X$. Then $z^d \in \mf^d\cap U$. The $\mf$-orbit of the element $z$ is $z+z \cdot \mf$. But then $(z+z \cdot \mf)^d \subseteq z^d + \mf^{d+1}\subseteq U$. So $p(z+z \cdot \mf) \subseteq X$.
\end{proof}

We recall that a \emph{socle} of a local algebra $A$ is the ideal $\mathrm{Soc}(A) := \{z\in A \mid z\cdot \mf = 0\}$.

\begin{corollary}
    The set $\{ \, p(z) \mid z \in \Soc(A)\setminus\{0\} \, \}$ is contained in $X$.
 \end{corollary}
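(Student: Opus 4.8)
The plan is to reduce everything to Proposition~\ref{complement}, which identifies the complement of the open $U$-orbit in $X$ with the set $\{p(z)\mid z\in\mf,\ z^d\in U\}$, where $d=\deg X$. Since this set is contained in $X$, it suffices to show that every nonzero element $z$ of $\Soc(A)$ belongs to $\mf$ and satisfies $z^d\in U$.

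First I would place the socle inside $\mf$, so that Proposition~\ref{complement} applies. The socle is an ideal of $A$, and it is proper: if $1\in\Soc(A)$ then $\mf=1\cdot\mf=0$ and $A=\BK$, a degenerate case with no projective space to speak of. In a local ring every proper ideal is contained in the maximal ideal, hence $\Soc(A)\subseteq\mf$, and in particular each $z\in\Soc(A)\setminus\{0\}$ lies in $\mf$.

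The computational core is then a single line: for $z\in\Soc(A)\subseteq\mf$ we have $z^2=z\cdot z\in z\cdot\mf=0$, hence $z^d=0$ for every $d\ge 2$. As $0\in U$, this gives $z^d\in U$, so $p(z)$ lies in the complement of the open orbit and therefore in $X$, as claimed.

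The point I would single out as the one genuine step, rather than a routine manipulation, is the inequality $d\ge 2$. This is exactly where the $H$-pair hypothesis enters: if $d=1$, then $\mf^2\subseteq U$ by Theorem~\ref{degree}, whence the unital subalgebra generated by $U$ is contained in $\BK\oplus U\subsetneq A$, contradicting the requirement that $U$ generate $A$ as an algebra with a unit. Thus $d\ge 2$ holds for every $H$-pair, and the argument closes. (For $d=1$ the conclusion would genuinely fail whenever a nonzero socle element lies outside $U$, so invoking the generation hypothesis here is essential, not cosmetic.)
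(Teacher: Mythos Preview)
Your proof is correct and follows essentially the same approach as the paper: both invoke Proposition~\ref{complement} via the observation that $z^d=0\in U$ for every nonzero $z\in\Soc(A)$. You simply make explicit two details the paper leaves implicit in its one-line argument, namely $\Soc(A)\subseteq\mf$ and $d\ge 2$.
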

 \begin{proof}
    For  all $z\in \Soc(A)$ we have $z^d = 0$ is in the group $U$.
 \end{proof}

\begin{corollary}\label{Soclem}
    If  $\mathrm{dim} (\Soc(A)) > 1$ then there are infinitely many $U$-orbits on $X$.
\end{corollary}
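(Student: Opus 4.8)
The plan is to produce infinitely many distinct $U$-orbits inside $X$ by showing that every point arising from the socle is actually a fixed point of the whole group $\mf$. First I would record the standard fact that $\Soc(A) \subseteq \mf$: since $\mf \neq 0$ (otherwise $A = \BK$ is a field and there is no hypersurface), any unit annihilating $\mf$ would force $\mf = 0$, so the socle can contain no units. Hence every $z \in \Soc(A)$ already lies in $\mf$.

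Next comes the central computation. For $z \in \Soc(A)$ and any $m \in \mf$ we have $m\cdot z = 0$ by the definition of the socle, so $m^i \cdot z = 0$ for all $i \geq 1$, and therefore
$$\exp(m)\cdot z = \left(1 + m + \tfrac{m^2}{2} + \cdots\right) z = z.$$
This shows that the action of $\mf$, and in particular the action of the subgroup $U$, fixes $p(z)$ for every $z \in \Soc(A)\setminus\{0\}$; each such point is therefore a one-point $U$-orbit.

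Finally I would assemble the conclusion. By the previous corollary the set $\{\,p(z) \mid z \in \Soc(A)\setminus\{0\}\,\} = \BP(\Soc(A))$ is contained in $X$, and by the computation above it consists entirely of $U$-fixed points, so distinct points of $\BP(\Soc(A))$ give distinct $U$-orbits. If $\dim(\Soc(A)) > 1$, then $\BP(\Soc(A))$ is a projective space of positive dimension over the infinite field $\BK$ and hence contains infinitely many points, which yields infinitely many $U$-orbits on $X$.

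There is essentially no obstacle to overcome: once one observes that socle elements are annihilated by $\mf$, the exponential series collapses to the identity and the fixed-point property is immediate. The only point needing a brief check is the inclusion $\Soc(A) \subseteq \mf$, which is a routine feature of local Artinian algebras that are not fields.
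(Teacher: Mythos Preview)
Your proof is correct and follows essentially the same approach as the paper: both show that every $p(z)$ with $z\in\Soc(A)\setminus\{0\}$ is a $U$-fixed point (via $\exp(m)\cdot z=z$), invoke the previous corollary to place $\BP(\Soc(A))$ inside $X$, and conclude that $\dim\Soc(A)>1$ yields infinitely many one-point orbits. Your write-up is simply more explicit about the inclusion $\Soc(A)\subseteq\mf$ and the collapse of the exponential series, which the paper leaves implicit.
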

\begin{proof}
    If $z\in \Soc(A)$ then $\exp(U) \cdot z = \{z\}$. So the set $\{ \, p(z) \mid z \in \Soc(A)\setminus\{0\} \, \} \subseteq X$ consists of the $U$-fixed points and has dimension at least 1.
\end{proof}

It is also possible to describe the relationship between $\mf$-orbits and $U$-orbits on $X$ . For an element $z\in A$ we denote by $\mathrm{Ann}(z)$ the ideal $\{a\in A \mid az = 0\}.$

\begin{proposition}\label{Uorbits}
    Let $z \in \mf\setminus\{0\}$ be an element with $p(z) \in X$. 
    \begin{enumerate}
        \item If $\mathrm{Ann}(z)+U = \mf$, then the $\mf$-orbit of $p(z)$ coincides with the $U$-orbit. 
        \item Otherwise, $\Ann(z) \subseteq U$ and the $\mf$-orbit of $p(z)$ is the union of an infinite number of~$U$-orbits.
     \end{enumerate}
\end{proposition}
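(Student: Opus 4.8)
The plan is to reduce everything to a computation of the stabilizer of the point $p(z)$ for the $\mf$-action and then to separate the two cases by a dimension count. First I would record the elementary but crucial dichotomy: since $U$ has codimension $1$ in $\mf$, the subspace $\Ann(z)+U$ equals either $U$ or $\mf$; hence the condition $\Ann(z)+U=\mf$ fails exactly when $\Ann(z)\subseteq U$. This makes the two alternatives in the statement mutually exclusive and exhaustive, and in particular it proves the inclusion $\Ann(z)\subseteq U$ asserted in part (2).

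The heart of the argument is the identity $\mathrm{Stab}_{\mf}(p(z))=\Ann(z)$, where the stabilizer is taken for the group $\mf\cong\BG_a^n$ acting by $m\circ p(a)=p(\exp(m)a)$. I would argue in two steps. An element $m$ stabilizes $p(z)$ if and only if $\exp(m)z=\lambda z$ for some $\lambda\in\BK^{*}$; writing $\exp(m)-\lambda=(1-\lambda)+\mu$ with $\mu\in\mf$ nilpotent, the element $\exp(m)-\lambda$ is a unit whenever $\lambda\neq 1$, which would force $z=0$. Hence necessarily $\lambda=1$, and the condition becomes $(\exp(m)-1)z=0$. Then I would factor $\exp(m)-1=m\,w$ with $w=\sum_{k\ge 1}m^{k-1}/k!=1+\tfrac{m}{2}+\cdots$ a unit (again because $m$ is nilpotent), so that $(\exp(m)-1)z=0$ is equivalent to $mz=0$, i.e. $m\in\Ann(z)$. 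The main obstacle here is only to set up these two unit factorizations carefully; everything else is formal.

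With the stabilizer identified, part (1) is immediate: if $\Ann(z)+U=\mf$, then any $m\in\mf$ decomposes as $m=u+a$ with $u\in U$ and $a\in\Ann(z)$, and $\exp(m)z=\exp(u)\exp(a)z=\exp(u)z$ since $\exp(a)z=z$; thus every point of the $\mf$-orbit of $p(z)$ already lies in its $U$-orbit, while the reverse inclusion is trivial. For part (2) I would use a dimension count. Since $z$ is nilpotent one has $\Ann(z)\neq 0$, and because $\exp(m)$ is a unit one has $\Ann(\exp(m)z)=\Ann(z)$ for every $m$; hence every point of the $\mf$-orbit has the same $\mf$-stabilizer $\Ann(z)$, and, as $\Ann(z)\subseteq U$, the same $U$-stabilizer $\Ann(z)\cap U=\Ann(z)$. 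Therefore the $\mf$-orbit is irreducible of dimension $n-\dim\Ann(z)$, while each of the $U$-orbits it contains has dimension $(n-1)-\dim\Ann(z)$, exactly one less. An irreducible variety cannot be a finite union of locally closed subvarieties of strictly smaller dimension, so the $\mf$-orbit must split into infinitely many $U$-orbits, completing the proof.
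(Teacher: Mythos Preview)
Your proof is correct and follows the same overall strategy as the paper: identify the $\mf$-stabilizer of $p(z)$ with $\Ann(z)$, then compare the $\mf$- and $U$-orbits by a dimension/quotient argument exploiting that $U$ has codimension~$1$ in~$\mf$. The only notable difference is in the stabilizer computation itself: the paper proves $\lambda=1$ and then $az=0$ by a filtration argument (choosing $k$ with $z\in\mf^{k}\setminus\mf^{k+1}$ and comparing graded pieces), whereas your two unit factorizations---$\exp(m)-\lambda$ is a unit when $\lambda\neq 1$, and $\exp(m)-1=m\cdot(1+\tfrac{m}{2}+\cdots)$ with the second factor a unit---give the same conclusions more cleanly and without invoking the $\mf$-adic filtration.
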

\begin{proof}

    We will show that $\Ann(z)$  coincides with the stabilizer $\mathrm{St}_{\mf}(p(z))$ with respect to the~$\mf$-action. The inclusion $\Ann(z) \subseteq \mathrm{St}_{\mf}(p(z))$ is clear. Indeed, if $a\in \Ann(z)$ then we have $az = 0$ and $\exp(a) \cdot z = z$. We now show the reverse inclusion.
    If $a \in \mathrm{St}_{\mf}(p(z))$ then 
    $$
        \exp(a) \cdot p(z) = p(z + az+ \frac{a^2}{2}z + \frac{a^3}{6}z +\ldots) = p(z),
    $$
    which implies
    $$
    z + az+ \frac{a^2}{2}z + \frac{a^3}{6}z + \ldots = \lambda z.
    $$
    for some $\lambda \in \BK.$
    There is a number $k \in \mathbb{N}$ such that $z\in \mf^{k}\setminus \mf^{k+1}$. Then the element $az + \frac{a^2}{2}z +  \frac{a^3}{6}z+ \ldots \ $ lies in the ideal $\mf^{k+1}$. So we have 
    $$az + \frac{a^2}{2}z +  \frac{a^3}{6}z+ \ldots \ = (\lambda - 1)z,$$
    which is possible only when $\lambda = 1.$ Therefore,
    $$az + \frac{a^2}{2}z +  \frac{a^3}{6}z+ \ldots \ = 0.$$
    If $az \neq 0$ then there is $r$ such that $az\in \mf^{r}\setminus \mf^{r+1}.$ But  $\frac{a^2}{2}z +  \frac{a^3}{6}z+ \ldots \ \in \mf^{r+1}.$ So $az = 0$ and $a\in \Ann(z).$

    Thus, the $\mf$-orbit of $p(z)$ is isomorphic to $\mf/\Ann(z)$ and $U$-orbit of $p(z)$ is isomorphic~to 
    $$U/(\Ann(z)\cap U)\simeq (U+\Ann(z))/\Ann(z).$$
    Hence, if $U+ \Ann(z) = \mf$ then the $\mf$-orbit of $p(z)$ coincides with the $U$-orbit, and if $U+\Ann(z) \neq \mf$ then the action of $U$ on $\mf/\Ann(z)$ has infinitely many orbits. Since the codimension of $U$ in $\mf$ is 1, in the last case we have $\Ann(z) \subseteq U.$
\end{proof}

\section{Main result}\label{MR}
In this section we state our main result.
Recall that for a local algebra $A$ with the maximal ideal $\mf$ the following sequence of numbers 
$$(\dim_{\BK} A/\mf, \ \dim_{\BK} \mf/\mf^2, \ \dim_{\BK} \mf^2/\mf^3, \ \ldots \ ) $$
is called a \emph{Hilbert-Samuel sequence}.  

\begin{proposition}
    Let $(A, U)$ be an $H$-pair and $X$ be the corresponding hypersurface in~$\BP(A).$ Suppose that there are finitely many $U$-orbits in $X$. Then the Hilbert-Samuel sequence of~$A$ is either $(1,  1,1,\ldots, 1)$ or $(1, 2, 1, \ldots 1)$.
    
\end{proposition}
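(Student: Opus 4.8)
The plan is to bound every entry of the Hilbert--Samuel sequence separately, the unifying device being that suitable \emph{leading--term} maps are constant on $U$--orbits, so that having only finitely many orbits collapses their images to finite sets. Throughout write $\mf$ for the maximal ideal, $h_k=\dim_\BK \mf^k/\mf^{k+1}$, and let $d$ be the degree of $X$, so that $\mf^{d+1}\subseteq U$ while $\mf^d\not\subseteq U$ by Theorem \ref{degree}. Since $\dim A/\mf=1$ always, I only need to prove $h_1\le 2$ and $h_k\le 1$ for all $k\ge 2$.

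First I would record that deep projective points automatically lie on $X$: for $k\ge 2$ and any $z\in\mf^k$ one has $z^d\in\mf^{kd}\subseteq\mf^{d+1}\subseteq U$, so $p(z)\in X$ by Proposition \ref{complement}; hence the linear subspace $\BP(\mf^k)$ is contained in $X$. Fixing $k\ge 2$ with $\mf^k\neq 0$, consider the surjective leading--term map
\[
\phi_k\colon \BP(\mf^k)\setminus\BP(\mf^{k+1})\longrightarrow \BP(\mf^k/\mf^{k+1}),\qquad p(z)\mapsto p(z+\mf^{k+1}).
\]
For $u\in U\subseteq\mf$ and $z\in\mf^k$, every term $\tfrac{u^j}{j!}z$ with $j\ge 1$ lies in $\mf^{k+1}$, so $\exp(u)\cdot z\equiv z\pmod{\mf^{k+1}}$ and $\phi_k$ is constant on $U$--orbits. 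As $X$ is a finite union of $U$--orbits, the source meets only finitely many of them, so the image of $\phi_k$ is finite; surjectivity then forces $\BP(\mf^k/\mf^{k+1})$ to be finite, i.e.\ $h_k\le 1$.

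The more delicate step is $h_1\le 2$, since now $\BP(\mf)$ is \emph{not} contained in $X$ and the argument must be refined. The key point is that $q(\bar z):=\pi(z^d)$, for any lift $z\in\mf$ of $\bar z\in V:=\mf/\mf^2$, is a well--defined homogeneous form of degree $d$ on $V$: replacing $z$ by $z+m$ with $m\in\mf^2$ changes $z^d$ by an element of $\mf^{d+1}\subseteq U=\ker\pi$, and $q\not\equiv 0$ because $\mf^d\not\subseteq U$. Writing $z=z_1+z'$ with $z'\in\mf^2$, the same estimate gives $z^d\equiv z_1^d\pmod{\mf^{d+1}}$, whence for $z\in\mf$ one has $p(z)\in X$ if and only if $q(\bar z)=0$. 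Thus $X\cap\BP(\mf)$ carries the leading--term map $p(z)\mapsto p(\bar z)$, which is $U$--invariant (because $uz\in\mf^2$ for $u\in U$) and whose image is exactly the projective hypersurface $\{q=0\}\subseteq\BP(V)=\BP^{\,h_1-1}$. Finiteness of the orbit set makes $\{q=0\}$ finite, which is impossible when $h_1\ge 3$, since then $\{q=0\}$ is a hypersurface of positive dimension $h_1-2$ in $\BP^{\,h_1-1}$; hence $h_1\le 2$.

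Finally I would assemble the bounds. Since $U$ generates $A$ we have $U+\mf^2=\mf$, so $\mf^2\not\subseteq U$ and in particular $\mf^2\neq 0$; combining $h_k\le 1$ with Nakayama shows $h_k=1$ for all $2\le k\le s$, where $\mf^s\neq 0=\mf^{s+1}$. If $h_1\le 1$ then $\mf$ is principal and the sequence is $(1,1,\dots,1)$, while if $h_1=2$ it is $(1,2,1,\dots,1)$, as claimed. I expect the main obstacle to be the bound on $h_1$: one must set up the form $q$ correctly and identify the image of the leading--term map with $\{q=0\}$, as this is precisely what separates the admissible borderline value $h_1=2$ from the excluded values $h_1\ge 3$.
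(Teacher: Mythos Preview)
Your proof is correct and follows essentially the same strategy as the paper: both use that the leading term $z+\mf^{k+1}$ is a $U$--orbit invariant on $\BP(\mf^k)\subseteq X$ to force $h_k\le1$ for $k\ge2$, and both bound $h_1$ by showing that the locus $\{\bar z\in\mf/\mf^2:z^d\in U\}$ is positive-dimensional in $\BP(\mf/\mf^2)$ once $h_1\ge3$. The one cosmetic difference is that you project via $\pi\colon\mf\to\mf/U$ to define the degree-$d$ form $q$ directly, whereas the paper (having already established $\dim\mf^d/\mf^{d+1}=1$) works with the map $\bar z\mapsto z^d+\mf^{d+1}$; since $\mf^{d+1}\subseteq U$ and $\mf^d\not\subseteq U$, the two zero-loci coincide, and your version has the minor advantage of not relying on the $k\ge2$ step.
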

\begin{proof}
    Since $\BK$ is algebraically closed and $A/\mf$ is a finite-dimensional field over $\BK$  we have $$\dim_{\BK} A/\mf = 1.$$ 
    Suppose there is a number $k\geq 2$ with $\dim_{\BK} \mf^k/\mf^{k+1} > 1.$ For all $z\in  \mf^k\setminus\{0\}$ we have 
    $$z^d \in \mf^{kd} \subseteq \mf^{d+1}\subseteq U,$$ where $d$ is the degree of $X$. 
    Then $p(z)$ lies in $X$ for all $z \in \mf^k \setminus \{0\}$. The $\mf$-orbit of $p(z) $ is $p(z+z \cdot \mf)\subseteq p(z+\mf^{k+1})$. Thus, if the images of elements $z_1$ and $z_2$ from $\mf^k$ in $\mf^k/\mf^{k+1}$ are not proportional, then the $\mf$-orbits of $p(z_1)$ and $p(z_2)$ do not coincide, so their $U$-orbits are also different. Therefore, if $\dim_{\BK} \mf^k/\mf^{k+1} > 1$, there are infinitely many $U$-orbits on $X$, this contradicts our assumption.

    It implies that the Hilbert-Samuel sequence has the form $(1, r, 1, \ldots, 1)$. Now suppose that $r\geq 3$ and consider the map:
    \begin{align*}
    \varphi\colon \mf/\mf^2 &\to \mf^{d}/\mf^{d+1},\\
    z + \mf^2 &\mapsto z^d + \mf^{d+1}.
    \end{align*}
    The map $\varphi$ is a morphism between algebraic varieties $\mf/\mf^2 \simeq \BA^r$ and $\mf^d/\mf^{d+1} \simeq \BA^1$. The set  $Z : = \varphi^{-1}(0+\mf^{d+1})$ is non-empty, so $\mathrm{dim}(Z) \geq r-1 \geq 2$. For all elements $z\in \mf\setminus\{0\}$ with $z+\mf^2 \in Z$ we have that $p(z)$ is lying  in $X$. As previously, when elements $z_1 + \mf^2 \in Z$ and $z_2+\mf^2 \in Z$ are not proportional then the $U$-orbits of $p(z_1)$ and $p(z_2)$ are different. 
    
    Since $\dim (Z) \geq 2$ there are infinitely many $U$-orbits on $X$. This contradicts our assumption, thus $r\leq 2$ and the Hilbert-Samuel sequence of~$A$ equals $(1,  \ldots, 1)$ or $(1, 2, 1, \ldots 1)$.
\end{proof}

\begin{proposition}\label{algebras}
    
    Let $(A, U)$ be an $H$-pair and $X$ be the corresponding hypersurface in~$\BP(A).$ Suppose that there are finitely many $U$-orbits in $X$. Then for $n \geq  1$ we have 
    $$A \simeq \BK[x]/(x^{n+1}) \quad \text{or} \quad A \simeq \BK[x,y]/(xy, x^{3}, y^2 - x^{2}).$$
\end{proposition}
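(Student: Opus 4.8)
The plan is to feed the previous proposition, which restricts the Hilbert--Samuel sequence of $A$ to $(1,1,\dots,1)$ or $(1,2,1,\dots,1)$, into a finer analysis based on two facts: finiteness of the number of $U$-orbits forces $\dim\Soc(A)=1$ by Corollary \ref{Soclem}, so $A$ is Gorenstein; and the degree $d$ of $X$ is the largest integer with $\mf^d\nsubseteq U$ by Theorem \ref{degree}. When the sequence is $(1,1,\dots,1)$ we have $\dim_{\BK}\mf/\mf^2=1$, so by Nakayama's lemma $\mf$ is generated by a single $x$, each power $x^k$ spans $\mf^k/\mf^{k+1}$, and comparing dimensions gives $A\simeq\BK[x]/(x^{n+1})$. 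The entire difficulty is thus concentrated in the sequence $(1,2,1,\dots,1)$, where I must show the sequence is actually $(1,2,1)$ (equivalently $\mf^3=0$ and $n=3$) and that $A$ is the asserted four-dimensional algebra.

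So assume $\dim_{\BK}\mf/\mf^2=2$, let $s$ be the largest integer with $\mf^s\neq0$, and consider the symmetric multiplication map $q\colon\mathrm{Sym}^2(\mf/\mf^2)\to\mf^2/\mf^3$. As $\mf^2$ is generated by products of generators, $q$ is a surjection of a $3$-dimensional space onto the $1$-dimensional space $\mf^2/\mf^3$, hence a nonzero quadratic form on the plane $\mf/\mf^2$; over the algebraically closed field $\BK$ it has rank $1$ or $2$. A rank $2$ form can be diagonalized to a basis with $\bar x^2=\bar y^2=0$ and $\bar x\bar y$ spanning degree $2$ in $\Gr A$, whence every product of three generators vanishes and $\mf^3=0$. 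Therefore, if $\mf^3\neq0$ then $q$ has rank $1$, and the proposition reduces to the single claim that the case $\mf^3\neq0$ (that is, $s\geq3$) is impossible.

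To prove this I would first normalize $A$. In the rank $1$ case there is a basis $x,y$ of $\mf/\mf^2$ with $x^2$ spanning $\mf^2/\mf^3$ and $xy,y^2\in\mf^3$; a short induction then gives $\mf^k=\langle x^k\rangle+\mf^{k+1}$, so $\{1,x,y,x^2,\dots,x^s\}$ is a basis of $A$. Adjusting $y$ by a polynomial in $x$ arranges $xy=0$ exactly; then $y^2\in\mf^3$ together with $y^2x=y(xy)=0$ forces $y^2=d_s\,x^s$, and the Gorenstein condition is precisely $d_s\neq0$ (otherwise $y\in\Soc(A)$). Rescaling $y$ yields $A\simeq\BK[x,y]/(xy,\,y^2-x^s,\,x^{s+1})$. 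The heart of the argument, and the step I expect to be the main obstacle, is to produce infinitely many $U$-orbits on this explicit algebra for every admissible $U$. For this I use the one-parameter family $z_\lambda=\lambda y+x^{s-1}$, $\lambda\in\BK$. One computes $z_\lambda^2=\lambda^2 x^s$ and $z_\lambda^3=0$; moreover $d\geq2$ always (a generating $U$ cannot contain $\mf^2$, since then its image in $\mf/\mf^2$ would be at most one-dimensional), while $d=2$ forces $x^s\in\mf^3\subseteq U$. Hence $z_\lambda^{\,d}\in U$ in every case, so $p(z_\lambda)\in X$ by Proposition \ref{complement}. Evaluating $\exp(u)\cdot z_\lambda$ for $u\in U$, using $xy=0$, $y^2=x^s$, $\mf^2y=0$ and $\mf^2x^{s-1}=0$, one finds that the $U$-orbit of $p(z_\lambda)$ equals $p(\{\lambda y+x^{s-1}+c\,x^s:c\in\BK\})$, the whole line of translates being reached because $U$ surjects onto $\mf/\mf^2$. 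Comparing coefficients of the distinct basis vectors $y$, $x^{s-1}$, $x^s$ (here $s\geq3$ is essential) shows $p(z_\lambda)$ and $p(z_{\lambda'})$ lie in one orbit only if $\lambda=\lambda'$, giving infinitely many $U$-orbits and the desired contradiction.

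Having ruled out $s\geq3$, I conclude $\mf^3=0$, so the sequence is $(1,2,1)$ and $n=3$. In this final situation the rank $1$ possibility is excluded by the Gorenstein hypothesis, since a basis with $x^2\neq0$, $xy=y^2=0$ would place both $y$ and $x^2$ in the socle; hence $q$ has rank $2$, and because $\mf^3=0$ the multiplication is completely determined by $q$. Diagonalizing to $u,v$ with $u^2=v^2=0$ and $uv\neq0$ gives $A\simeq\BK[u,v]/(u^2,v^2)\simeq\BK[x,y]/(xy,\,x^3,\,y^2-x^2)$, which completes the classification. Besides the orbit computation above, the points I expect to require the most care are the legitimacy of the normalization for all $s\geq3$ and the verification that the admissible subspaces $U$ indeed surject onto $\mf/\mf^2$.
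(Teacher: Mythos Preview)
Your argument is correct and runs along the same arc as the paper's: restrict the Hilbert--Samuel sequence, normalize the algebra in the $(1,2,1,\dots,1)$ case to $\BK[x,y]/(xy,\,y^2-x^s,\,x^{s+1})$, and then exhibit a one-parameter family of points of $X$ with pairwise distinct orbits to exclude $s\geq 3$. The differences are in packaging rather than substance. You organize the $(1,2,1,\dots,1)$ case by the rank of the multiplication form $q\colon\mathrm{Sym}^2(\mf/\mf^2)\to\mf^2/\mf^3$, which dispatches rank~$2$ (forcing $\mf^3=0$) at once and isolates the work in rank~$1$; the paper instead quotes \cite[Lemma~2.13]{AZ} to produce $x$ with $\langle x^r\rangle=\mf^r$ and proceeds directly to the same normal form. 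Your test family is $z_\lambda=\lambda y+x^{s-1}$, while the paper uses $y+\alpha x^2$; both land in $X$ and separate orbits, but the paper only checks that the \emph{$\mf$-orbits} are distinct (they sit in distinct cosets $y+\alpha x^2+\mf^3$), which spares your verification that $U$ surjects onto $\mf/\mf^2$ and your explicit $U$-orbit computation. One small case you pass over: when $s=1$ (Hilbert--Samuel sequence $(1,2)$) the form $q$ is identically zero, so your rank dichotomy does not apply; the paper handles this separately, but it is already excluded by the hypothesis $\dim\Soc(A)=1$ you invoked at the start.
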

\begin{proof}
    First suppose that the Hilbert-Samuel sequence of $A$ equal to $(1, 1, \ldots, 1).$ Then $A$ is generated by one nilpotent element, so $A$ is isomorphic to $\BK[x]/(x^{n+1}).$

    Now consider the case when the Hilbert-Samuel sequence of $A$ is $(1, 2, \ldots, 1).$ Denote by~$r$ the maximal number such that $\mf^r \neq 0$, where $\mf$ is the maximal ideal in $A$. If $r = 1$ then $A\simeq \BK[x,y]/(x^2, xy, y^2)$. In this case $\Soc(A) = \langle x, y\rangle$, this contradicts Corollary \ref{Soclem}.

    Now consider the case $r>1$. Then there is an element $x \in \mf$ such that $\langle x^r \rangle = \mf^r$, see \cite[Lemma 2.13]{AZ}. Hence, $\mf = \langle x, x^2, \ldots, x^r, y \rangle$ where $y\in \mf\setminus \mf^2$ and images of $x$ and~$y$ are linearly independent in $\mf/\mf^2$. Thus, $xy$ lies  in $\mf^2$, so $xy = f(x)$, where $f(x)$ is a polynomial divisible by $x^2$. We replace $y$ with $y-\frac{f(x)}{x}$ to obtain $xy = 0$. 

    The element $y^2$ belongs to $\mf^2.$ Thus, $y^2 = g(x)$, where $g(x)$ is a polynomial divisible by $x^2.$ Assume that $y^2 = 0$, then $\Soc(A) = \langle x^r, y\rangle$, which contradicts Corollary \ref{Soclem}. On the other hand, $xy^2 = (xy)y = 0 = xg(x).$ It implies that $g(x) = \lambda x^r$, where $\lambda \in \BK\setminus \{0\}.$ We replace~$y$ with $\sqrt{\lambda}y$ to get $y^2 = x^r$. Then $A$ is isomorphic to the algebra $\BK[x,y]/(xy, x^{r+1}, y^2 - x^{r})$.

To complete the proof we should show that $r\leq 2$. Assume the converse, i.e., $r>2$.
    If we denote by $d \geq 2$ the degree of the hypersurface $X$, then 
    $\mf^{d+1} \subseteq U$. We have
    $$(y+\alpha x^2)^d = y^d + \alpha^dx^{2d} \in \mf^{d+1} \quad \text{for all }\alpha \in \BK.$$
Here we use that $y^2 = x^r$ and $ y^3 = 0$. Therefore, $p(y+\alpha x^2)\in X$ for all $\alpha \in \BK$. The $\mf$-orbit of $(y+\alpha x^2)$ is the set 
$$y + \alpha x^2 + (y + \alpha x^2)\mf \subseteq y + \alpha x^2 + \mf^3.$$ That is, the $\mf$-orbits of the points $p(y + \alpha x^2)$ do not coincide for different $\alpha$. Hence, if $r > 2$ there are infinitely many $U$-orbits on $X$, which leads to a contradiction.
 \end{proof}

 \begin{remark}\label{rem}
     Note that the algebra $\BK[x,y]/(xy, x^3, y^2 - x^2)$ is isomorphic to~$\BK[x,y]/(x^2, y^2)$. To see this, one should  take $\tilde{x} = y- ix$ and $\tilde{y} = y + ix$, then we get 
     $$\BK[x,y]/(xy, x^3, y^2 - x^2) = \BK[\tilde{x},\tilde{y}]/(\tilde{x}^2,\tilde{y}^2).$$
 \end{remark}
We are ready to state our first main result.
\begin{theorem}\label{maintheor}
    
    Let $(A, U)$ be an $H$-pair and $X$ be the corresponding hypersurface in~$\BP(A)$. Then there are finitely many $U$-orbits on $X$ if and only if the pair $(A, U)$ is isomorphic to one of the following pairs:
    \begin{align*}
    (\BK[x]/(x^{n+1}), \ U_i),\ &\text{where }\ U_i := \langle x, x^2,  \ldots, x^{i-1}, x^{i+1}, \ldots, x^n\rangle \ \text{ with } n-1 \leq i \leq n, \quad \text{or}\\
    (\BK[x,y]/(x^2, y^2), \ W),\ &\text{where }\ W = \langle x, y \rangle.
        \end{align*}
\end{theorem}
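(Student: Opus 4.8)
The plan is to leverage the structural results already in hand and then run a case analysis on~$U$. By Proposition~\ref{algebras} together with Remark~\ref{rem}, the assumption that $X$ has finitely many $U$-orbits forces $A$ to be either the chain algebra $\BK[x]/(x^{n+1})$ or the algebra $\BK[x,y]/(x^2,y^2)$, the latter occurring only for $n=3$. Hence it remains, for each of these two algebras, to single out the codimension-one generating subspaces $U\subseteq\mf$ that yield finitely many orbits, and to classify them up to the action of $\Aut(A)$ (equivalently, up to isomorphism of $H$-pairs).

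\textbf{The chain algebra.} Here I would first note that the $\mf$-orbits on $\BP(A)$ are exactly the strata by order: any $z\in\mf^k\setminus\mf^{k+1}$ has $p(z)$ in the $\mf$-orbit of $p(x^k)$, so there are exactly $n+1$ such orbits. As there are only finitely many $\mf$-orbits, Proposition~\ref{Uorbits} reduces the problem to a single condition: the number of $U$-orbits is finite if and only if $\Ann(z)+U=\mf$ for every $z\in\mf\setminus\{0\}$ with $z^d\in U$, where $d$ is the degree of $X$; by Proposition~\ref{complement} these are exactly the $z$ whose images fill the complement of the open orbit. Since $\Ann(z)=\mf^{n+1-k}$ when $z$ has order $k$, Theorem~\ref{degree} recasts this as the requirement that every $z\neq 0$ with $z^d\in U$ have order at least $n+1-d$. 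I would then prove this requirement is equivalent to $d\geq n-1$: for $d\leq n-2$ the element $z=x^2$ lies in $X$ (as $x^{2d}\in\mf^{d+1}\subseteq U$) and has order $2<n+1-d$, so the condition fails and by Proposition~\ref{Uorbits} there are infinitely many orbits; for $d=n$ the condition is vacuous, and for $d=n-1$ no order-one $z$ can satisfy $z^{n-1}\in U$, because $\mf^n=\langle x^n\rangle\subseteq U$ while $x^{n-1}\notin U$.

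\textbf{The main obstacle.} It remains to classify the admissible subspaces of degrees $n$ and $n-1$, and this normalization is the step I expect to be the main difficulty. Writing $U=\Ker\ell$ for a linear functional $\ell$ on $\mf$ and applying an automorphism $\phi$ of the form $x\mapsto x+a_2x^2+\cdots+a_nx^n$, the quantities $\ell(\phi(x^j))$ depend triangularly on the $a_i$, so one can solve for $a_2,\dots,a_n$ successively and bring $\ell$ to a coordinate functional. This yields that every subspace of degree $n$ is equivalent to $U_n$ and every subspace of degree $n-1$ to $U_{n-1}$; the case $d=n-1$ requires $n\geq 3$ for $U$ to generate $A$, which is exactly why $\BP^2$ carries only the curve attached to $U_2$.

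\textbf{The algebra $\BK[x,y]/(x^2,y^2)$.} Here the argument is short. Generation forces $xy\notin U$, so $U\to\mf/\mf^2$ is an isomorphism and $U=\langle x+\lambda xy,\ y+\mu xy\rangle$ for unique $\lambda,\mu\in\BK$; the automorphism $x\mapsto x-\lambda xy$, $y\mapsto y-\mu xy$ carries $U$ to $W=\langle x,y\rangle$, so $W$ is the unique admissible subspace up to isomorphism. A direct computation with Proposition~\ref{complement} and Proposition~\ref{Uorbits} then shows that $(A,W)$ has exactly four $U$-orbits, namely the open orbit, the two one-dimensional orbits through $p(x)$ and $p(y)$, and the socle fixed point $p(xy)$. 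Collecting the two algebras establishes both implications of the theorem.
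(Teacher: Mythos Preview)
Your proposal is correct and follows essentially the same route as the paper: reduce the algebra via Proposition~\ref{algebras} and Remark~\ref{rem}, use Proposition~\ref{complement} and Proposition~\ref{Uorbits} to translate finiteness into the annihilator condition $\Ann(z)+U=\mf$, and normalize $U$ via the triangular automorphisms $x\mapsto x+\sum_{j\ge 2}\beta_jx^j$. The only cosmetic difference is ordering---the paper first normalizes every generating $U\subseteq\BK[x]/(x^{n+1})$ to some $U_i$ (by an induction on the quotient $A/\langle x^n\rangle$) and then applies Lemma~\ref{lem1} to single out $i\in\{n-1,n\}$, whereas you first derive $d\in\{n-1,n\}$ directly from the annihilator criterion and only then normalize; both arguments use the same ingredients.
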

To prove Theorem \ref{maintheor}, we need the following lemma.
 \begin{lemma} \label{lem1}\begin{enumerate}
     \item  Let $(A, U)$ be an $H$-pair $(\BK[x]/(x^{n+1}), \ U_i),$ where $$U_i = \langle x, x^2 \ldots, x^{i-1}, x^{i+1}, \ldots, x^n\rangle$$ with $i > 1$. Consider the corresponding hypersurface $X$. Then there are finitely many $U$-orbits on~$X$ if and only if $ n-1\leq i \leq n$.
     \item Let  $(A, U)$ be an $H$-pair $(\BK[x,y]/(x^2, y^2), \ \langle x, y \rangle)$ and $X$ is the corresponding hypersurface. Then there are finitely many $U$-orbits on~$X$.
 \end{enumerate}
 
    \end{lemma}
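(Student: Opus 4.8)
The plan is to count $U$-orbits by reducing to $\mf$-orbits through Proposition~\ref{Uorbits}, together with the explicit description of the complement of the open orbit from Proposition~\ref{complement}. Since the open orbit is always a single $U$-orbit, finiteness of the number of $U$-orbits on $X$ is equivalent to finiteness on the complement $\{p(z)\mid z\in\mf,\ z^d\in U\}$, where $d$ denotes the degree of $X$. The heart of the argument is then: finitely many $U$-orbits occur exactly when there are finitely many $\mf$-orbits in the complement \emph{and} none of them splits into infinitely many $U$-orbits, and Proposition~\ref{Uorbits} tells us splitting happens precisely when $\Ann(z)\subseteq U$.

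For part~(1) I would first compute the degree. Using Theorem~\ref{degree} and $\mf^k=\langle x^k,\dots,x^n\rangle$, the largest $d$ with $\mf^d\nsubseteq U_i$ is $d=i$, because $\mf^d\subseteq U_i$ exactly when $x^i\notin\mf^d$, i.e. $d>i$. Next I would identify the complement: for $z\in\mf\setminus\mf^2$ the leading term produces a nonzero coefficient of $x^i$ in $z^i$, so $z^i\notin U_i$, whereas for $z\in\mf^2$ one gets $z^i\in\mf^{2i}\subseteq\mf^{i+1}\subseteq U_i$. Hence the complement of the open orbit is exactly $\{p(z)\mid z\in\mf^2\setminus\{0\}\}$. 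Since $A=\BK[x]/(x^{n+1})$ is a chain ring, every $z\in\mf^k\setminus\mf^{k+1}$ equals a unit times $x^k$, so $\mf^k/\mf^{k+1}$ is one-dimensional and the whole shell $\mf^k\setminus\mf^{k+1}$ projects to a single $\mf$-orbit; thus the complement contains only finitely many $\mf$-orbits, one for each $k=2,\dots,n$. Finiteness of $U$-orbits therefore reduces to deciding, shell by shell, whether the $\mf$-orbit splits.

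The decisive computation is the annihilator: for $z\in\mf^k\setminus\mf^{k+1}$ one has $\Ann(z)=\Ann(x^k)=\mf^{\,n+1-k}$, and $\Ann(z)+U_i=\mf$ precisely when $x^i\in\Ann(z)$, i.e. $n+1-k\le i$, equivalently $k\ge n+1-i$. By Proposition~\ref{Uorbits} the $\mf$-orbit then coincides with the $U$-orbit when $k\ge n+1-i$ and splits into infinitely many $U$-orbits otherwise. As the smallest shell occurring in the complement is $k=2$, there are finitely many $U$-orbits if and only if $2\ge n+1-i$, that is $i\ge n-1$; together with $i\le n$ this gives $n-1\le i\le n$. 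I expect this to be the main obstacle: correctly pinning down $\Ann(z)$ and translating $\Ann(z)+U_i=\mf$ into the numerical inequality, while keeping track that $k=2$ is the binding case that sets the threshold.

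For part~(2) I would argue by direct computation in the four-dimensional algebra $A=\BK[x,y]/(x^2,y^2)$ with basis $1,x,y,xy$, where $\mf=\langle x,y,xy\rangle$, $\mf^2=\langle xy\rangle$, $\mf^3=0$, so the degree of $X$ is $2$. Writing $z=ax+by+cxy$ yields $z^2=2ab\,xy$, so by Proposition~\ref{complement} the complement of the open orbit is $\{p(z)\mid ab=0\}$. Computing annihilators, e.g. $\Ann(y)=\langle y,xy\rangle$, $\Ann(x)=\langle x,xy\rangle$, $\Ann(xy)=\mf$, and applying Proposition~\ref{Uorbits}, each of the loci $\{p(y+\alpha xy)\mid\alpha\in\BK\}$, $\{p(x+\alpha xy)\mid\alpha\in\BK\}$ and the single socle point $p(xy)$ is one $U$-orbit; together with the open orbit this yields exactly four $U$-orbits, in particular finitely many.
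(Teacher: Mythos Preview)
Your argument is correct and follows essentially the same route as the paper: compute the degree via Theorem~\ref{degree}, identify the complement of the open orbit as $p(\mf^2)$ via Proposition~\ref{complement}, and then use Proposition~\ref{Uorbits} to reduce finiteness to the condition $\Ann(z)+U_i=\mf$ for all $z\in\mf^2\setminus\{0\}$, which in $\BK[x]/(x^{n+1})$ boils down to $(x^{n-1})+U_i=\mf$. Your shell-by-shell annihilator computation $\Ann(x^k)=\mf^{\,n+1-k}$ is a slightly more explicit bookkeeping than the paper's (which just notes that the worst case is $z\in\mf^2\setminus\mf^3$, where $\Ann(z)=(x^{n-1})$, and that all deeper shells have larger annihilators), but the content is identical; likewise your direct enumeration of the four $U$-orbits in part~(2) matches the paper's verification that the three non-open $\mf$-orbits coincide with $U$-orbits.
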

    \begin{proof}
     First consider the case when an $H$-pair $(A, U)$ equals to $(\BK[x]/(x^{n+1}), \ U_i)$. By Theorem \ref{degree}, the degree of $X$ is equal to $i$. By Proposition~\ref{complement}, the complement to the open $U$-orbit in $X$ is the set 
        $$\{ \, p(z) \mid z \in \mf \text{ such that } z^i \in U\, \} = p(\mf^2).$$
    By Corollary \ref{morb}, for each point $p(z)$ from this set the $\mf$-orbit of $p(z)$ is contained in $X$. The total number of $\mf$-orbits on $\BP(A)$ is finite, see \cite[Proposition 3.7]{HT}. Each $\mf$-orbit either coincides with an $U$-orbit or is the union of infinite number of $U$-orbits. Therefore, the total number of $U$-orbits in $X$ is finite if and only if for all $z\in \mf^2\setminus \{0\}$ the $\mf$-orbit of $p(z)$ is equal to $U$-orbit of $p(z)$. By Proposition \ref{Uorbits} this is equivalent to 
    $$\Ann(z) + U = \mf, \ \forall z \in \mf^2.$$
    For $z\in \mf^2\setminus \mf^3$ we have $\Ann(z) = \mf^{n-1} = (x^{n-1})$ and $\Ann(z) \supseteq ( x^{n-1})$ for all other $z\in \mf^2$. Therefore, the total number of $U$-orbits in $X$ is finite  if and only if 
    $$(x^{n-1}) + U_i = \mf.$$
    It implies that $i = n$ or $n-1$.

    In the case when $(A, U) = (\BK[x,y]/(x^2, y^2), \ \langle x, y \rangle)$, the degree of $X$ is 2. Three $\mf$-orbits are contained in the complement to the open $U$-orbit in $X$. They are $p(x + \BK xy), p(y+ \BK xy)$ and $p(xy)$. It is easy to see that all these  $\mf$-orbits coincide with $U$-orbits.
    \end{proof}
\begin{proof}[Proof of Theorem \ref{maintheor}] 
    Let $(A, U)$ be an $H$-pair and suppose that corresponding hypersurface $X \subseteq \BP^n$ contains only a finite number of $U$-orbits. By Proposition \ref{algebras} and Remark \ref{rem} the algebra $A$ is isomorphic to $\BK[x]/(x^{n+1})$ or  $\BK[x,y]/(x^2, y^2)$.

    Consider the case $A \simeq \BK[x]/(x^{n+1})$. Let $U$ be an $(n-1)$-dimensional subspace in $\mf$, which generates $A$. Suppose that $\langle x^n \rangle \nsubseteq U$. Then 
    $$U = \langle x + \alpha_1x^n, x^2 + \alpha_2x^n, \ldots, x^{n-1} + \alpha_{n-1}x^n\rangle$$
    for some $\alpha_1,\ldots, \alpha_{n-1} \in \BK$. For all $\beta_2, \ldots, \beta_n \in \BK$ we consider an automorphism $\varphi$ of~$A$, $\varphi \colon x\mapsto x + \beta_2x^2 + \ldots +\beta_nx^n$. Then
\begin{align*}
  \varphi (x^k + \alpha_kx^n) =  (k\beta_{n-k+1} + h_k(\beta_2, \ldots, \beta_{n-k})+\alpha_k)x^n + s_k(x),
\end{align*}
where $h_k$ and $s_k$ are polynomials and the degree of $s_k$ is less than $n$.

    We take $\beta_{n-k+1} = -\frac{1}{k}(\alpha_{k} + h_{k}(\beta_2, \ldots, \beta_{n-k}))$ for all $k=1,\ldots,n-1$. Then 
    $$\varphi(x^k + \alpha_kx^n) \in \langle x, \ldots, x^{n-1}\rangle \quad \forall k=1,\ldots,n-1.$$
    Therefore, $\varphi(U) = \langle x, \ldots, x^{n-1} \rangle$.

    If $\langle x^n \rangle \subseteq U$ we can consider the canonical homomorphism $\pi\colon A \to A/\langle x^n \rangle \simeq \BK[x]/(x^n)$. Then $\pi(U)$ is an $(n-2)$-dimensional subspace that generates $A/\langle x^n\rangle$. Proceeding by induction we obtain that up to an automorphism of $A/\langle  x^n \rangle$
    $$\pi(U) = \langle x + \langle x^n \rangle, x^2 +  \langle x^n \rangle, \ldots, x^{i-1} + \langle x^n \rangle,  x^{i+1} + \langle x^n \rangle  , \ldots   \rangle$$
    for some $i \geq 2$. But then $U = U_i$.

    Now we consider the case $A \simeq \BK[x,y]/(x^2, y^2)$. If a 2-dimensional subspace $W$ in $\langle x, y, xy \rangle$ generates $A$ then $W = \langle x + \alpha xy, \ y + \beta xy\rangle$. Applying the automorphism of $A$
    $$x \mapsto x -\alpha xy,\quad y \mapsto y - \beta xy,$$
     we obtain that $W = \langle x, y\rangle$. Then Lemma \ref{lem1} completes the proof.
\end{proof}

By an $H$-pair we can find the equation of the corresponding hypersurface $X$. For example, we consider the $H$-pair $(A, U) = (\BK[x]/(x^3), \ \langle x \rangle)$. Then we apply Theorem \ref{equation}. If we choose a basis $1, x, x^2$ in $A$ then the map $\pi\colon A\to A/U$ can be given as follows:
$$z_0 + z_1x + z_2x^2 \mapsto z_0 + z_2x^2.$$
In this case, the degree of $X$ is 2. If we denote $z = z_1x + z_2x^2$ we obtain
$$\ln(1+\frac{z}{z_0}) = \frac{z}{z_0} - \frac{z^2}{2z_0^2} = \frac{z_1}{z_0}x + \frac{2z_0z_2 - z_1^2}{2z_0^2}x^2.$$
The hypersurface $X$ is then given by the following equation:
$$z_0^2 \cdot \pi(\ln(1+\frac{z}{z_0})) =  z_0z_2 - \frac{1}{2}z_1^2 = 0.$$
This is a non-degenerate quadric of rank 3.
Below we recall the definition of a non-degenerate hypersurface.

\begin{definition}\cite[Definition 2.22]{AZ}\label{nond}
    Suppose a projective hypersurface $X \subseteq \BP^n$ of degree $d$ is given by an equation $f(z_0, z_1,\ldots,z_n) = 0$. Then $X$ is called \emph{non-degenerate} if  there is no linear transformation of variables $z_0,\ldots, z_n$ that reduces the number of variables in $f$ to less than $n+1$.
\end{definition}
An $H$-pair $(A,U)$ defines a non-degenerate hypersurface if and only if~$\dim(\Soc(A)) = 1$ and $\mf = U \oplus \Soc(A)$, see \cite[Theorem 2.30]{AZ}. As a corollary we have the following result.
\begin{corollary}
    \label{finalcor}
Let $X\subseteq \BP^n$ be a projective hypersurface admitting an induced additive action with a finite number of orbits.
    \begin{enumerate}
        \item When $n=2$, $X$ is $\mathbb{P}^1$  embedded to $\BP^2$ via Veronese embedding of degree 2.
        \item When $n=3$, $X$ is one of the following projective surfaces: 
        \begin{itemize}
            \item[$(a)$] $\BP^1\times \BP^1$  embedded to $\BP^3$ as a non-degenerate quadric of rank 4 via Segre embedding;
            \item[$(b)$] The non-degenerate cubic
            $z_0^2z_3 - z_0z_1z_2 + \frac{z_1^3}{3} = 0$.
            \item[$(c)$] The degenerate quadric of rank 3.
        \end{itemize}
        \item When $n>3$, $X$ is either a non-degenerate hypersurface $X_n$ of degree $n$ or a degenerate hypersurface $Y_{n}$ of degree $n-1$. Moreover, $Y_n$ is a projective cone over $X_{n-1}.$
    \end{enumerate}

\end{corollary}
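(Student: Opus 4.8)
The plan is to derive the corollary directly from Theorem~\ref{maintheor} by translating each $H$-pair in the classification into the geometry of its hypersurface. Since an $(n+1)$-dimensional local algebra produces a hypersurface in $\BP^n$, I would first isolate, for each $n$, the pairs whose algebra has dimension $n+1$. As $\dim_{\BK} \BK[x]/(x^{n+1}) = n+1$ while $\dim_{\BK} \BK[x,y]/(x^2,y^2) = 4$, the second algebra occurs only for $n=3$. For the first family the generation condition in the definition of an $H$-pair forces $x \in U_i$, hence $i \geq 2$ (for $i=1$ one has $U_1 = \mf^2$, which cannot produce $x$); together with the bound $n-1 \leq i \leq n$ of Theorem~\ref{maintheor} this leaves only $i = n$ when $n=2$, and exactly the two values $i \in \{n-1, n\}$ when $n \geq 3$.

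For each surviving pair I would then read off the degree from Theorem~\ref{degree} and test non-degeneracy by the criterion $\dim(\Soc(A)) = 1$ and $\mf = U \oplus \Soc(A)$ quoted after Definition~\ref{nond}. For $A = \BK[x]/(x^{n+1})$ one has $\Soc(A) = \langle x^n\rangle$: the choice $i=n$ gives degree $n$ with $\mf = U_n \oplus \Soc(A)$, so $X_n$ is non-degenerate, whereas $i = n-1$ gives degree $n-1$ and $x^n \in U_{n-1}$, so that pair is degenerate. For $A = \BK[x,y]/(x^2,y^2)$ one checks $\Soc(A) = \langle xy\rangle$ and $\mf = \langle x,y\rangle \oplus \langle xy\rangle$, so the quadric is non-degenerate of rank~$4$; computing its equation via Theorem~\ref{equation} yields $z_0 z_3 - z_1 z_2 = 0$, the Segre image of $\BP^1\times\BP^1$. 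The remaining standard identifications are routine applications of Theorem~\ref{equation}: for $n=2$ the unique pair $(\BK[x]/(x^3), \langle x\rangle)$ gives the rank-$3$ quadric computed just before Definition~\ref{nond}, i.e.\ the smooth conic $\BP^1$ under the degree-$2$ Veronese embedding, and for $n=3$ the pair $(\BK[x]/(x^4), \langle x, x^2\rangle)$ gives the displayed cubic $z_0^2 z_3 - z_0 z_1 z_2 + \tfrac{z_1^3}{3} = 0$.

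The one substantive point, which I expect to be the main obstacle, is the claim that the degenerate hypersurface $Y_n$ arising from $i = n-1$ is a projective cone over $X_{n-1}$. To establish it I would write the equation of $Y_n$ explicitly via Theorem~\ref{equation}: here $\pi$ is the projection of $\mf$ onto its $x^{n-1}$-component, so the defining polynomial is $z_0^{n-1}$ times the coefficient of $x^{n-1}$ in $\ln(1 + z/z_0)$, where $z = z_1 x + \cdots + z_n x^n$ in $\BK[x]/(x^{n+1})$. The crucial observation is that the coefficient of $x^{n-1}$ in every power $z^j$ involves only $x, \ldots, x^{n-1}$, hence only the variables $z_1, \ldots, z_{n-1}$; in particular $z_n$ does not appear, so $Y_n$ is a cone with vertex $[0:\cdots:0:1]$. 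Moreover, since this coefficient is insensitive to whether one works modulo $x^n$ or $x^{n+1}$, the residual polynomial in $z_0, \ldots, z_{n-1}$ coincides with the one obtained from the $H$-pair $(\BK[x]/(x^n), \langle x, \ldots, x^{n-2}\rangle)$, which by the same recipe defines $X_{n-1}$. Applying this observation with $n=3$ also realizes the rank-$3$ quadric of case~$(2)$ as the cone over the conic of case~$(1)$, completing the case analysis.
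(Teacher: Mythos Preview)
Your proposal is correct and follows exactly the approach the paper intends: the paper states the corollary without an explicit proof, supplying the ingredients in the surrounding text (the non-degeneracy criterion after Definition~\ref{nond}, the worked example of the rank-$3$ conic, and the Remark after the corollary that the equation of the degenerate hypersurface coincides with that of $X_{n-1}$), and you have filled in precisely those details. Your observation that the $x^{n-1}$-coefficient of $\ln(1+z/z_0)$ does not involve $z_n$ is the content of that Remark, and your exclusion of $i=1$ via the generation condition is the implicit reason the theorem's range $n-1\le i\le n$ collapses to a single case when $n=2$.
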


In Table \ref{tab:metka} one can find the equations of the resulting hypersurfaces of dimensions 1--4.
\bigskip
\begin{center}
\begin{table}[]
\begin{tabular}{|c||c|}
\hline \\[-1em]
 $\dim\ X$ & \ The equation of a hypersurface \\ 
 \hline \hline \\[-1em]
1 &\ \ \ $z_0z_2 - \frac{1}{2}z_1^2 = 0$\\ 
\hline \\[-1em]
2 &\ \ \ $z_0z_3 - z_1z_2 = 0$ \\
\hline \\[-1em]
2 &\ \ \ $z_0^2z_3 - z_0z_1z_2 + \frac{z_1^3}{3} = 0$ \\
\hline \\[-1em]
2 &\ \ \ $z_0z_2 - \frac{1}{2}z_1^2 = 0$ \\
\hline \\[-1em]
3 &\ \ \ $z_0^3z_4 - z_0^2z_1z_3 + \frac{z_0^2z_2^2}{2} + z_0z_1^2z_2 - \frac{z_1^4}{4} = 0$ \\
\hline \\[-1em]
3 &\ \ \ $z_0^2z_3 - z_0z_1z_2 + \frac{z_1^3}{3} = 0$ \\
\hline \\[-1em]
4 &\ \ \ $z_0^4z_5 - z_0^3z_1z_4 - z_0^3z_2z_3 + z_0^2z_1^2z_3 + z_0^2z_1z_2^2 - z_0z_1^3z_2 + \frac{z_1^5}{5} = 0$ \\
\hline \\[-1em]
4 &\ \ \ $z_0^3z_4 - z_0^2z_1z_3 + \frac{z_0^2z_2^2}{2} + z_0z_1^2z_2 - \frac{z_1^4}{4} = 0$ \\
\hline
\end{tabular}\vspace{0.3 cm}
\caption{}\label{tab:metka}
\end{table}
\end{center}
\smallskip

\begin{remark}
It was proven in \cite{ABZ} that for each $n \in \BN$ there is a unique hypersurface in $\BP^n$ of degree $n$ that admits an additive action. It is the hypersurface which corresponds to the $H$-pair $(\BK[x]/(x^{n+1}), \langle x, x^2, \ldots, x^{n-1}\rangle).$ So we see that except the quadric of rank 4 in $\BP^3$ this is also a unique non-degenerate hypersurface with an additive action with a finite number of orbits.
\end{remark}

\begin{remark}
    Using Theorem \ref{equation} it is easy to see that the degenerate hypersurface corresponding to $H$-pair $(\BK[x]/(x^{n+1}), \ U)$ with $U_ := \langle x, x^2 \ldots, x^{n-2}, x^{n}\rangle$ can be given by the same equation in $\BP^n$ as the non-degenerate hypersurface in $\BP^{n-1}$ corresponding to $H$-pair $(\BK[x]/(x^{n}), \ U)$ with $U := \langle x, x^2 \ldots, x^{n-2}\rangle$.
\end{remark}

\section{Properties of hypersurfaces}\label{PROP}

\subsection{Orbits} In this section we describe the structure of orbits on hypersurfaces that we found in the previous section.

Let $A$ be a local algebra of dimension $n+1$ and $\mf$ is the maximal ideal in $A$. Then the $\mf$-orbit in $\BP(A) = \BP^n$ of an element $p(z)$ for $z\in A$ is the set 
$$O_{\mf}(p(z)) = \{p(w)\mid w\  \text{is associated with } z \}.$$
Here by $O_\mf(y)$ for $y\in \BP^n$ we mean the $\mf$-orbit of $y$. Hence, for $A = \BK[x]/(x^{n+1})$ we have $n+1$ $\mf$-orbits:
$$O_{\mf}(p(1)), O_{\mf}(p(x)), \ldots, O_{\mf}(p(x^n)).$$

Now we consider the non-degenerate hypersurface $X$ corresponding to the $H$-pair $(\BK[x]/(x^{n+1}),U)$, where $U = \langle x, x^2, \ldots x^{n-1}\rangle.$ Since the number of $U$-orbits in $X$ is finite by Proposition \ref{Uorbits} all $U$-orbits in $X$ except $U$-orbit of $p(1)$ coincide with $\mf$-orbits.

Among points $p(x),\ldots, p(x^n)$ exactly points $p(x^2), \ldots, p(x^n)$ belong to $X$. So there are exactly $n$ $U$-orbits in $X$:
$$O_U(p(1)),O_U(p(x^2)),\ldots, O_U(p(x^n)).$$
Similarly, here we denote by $O_U(y)$ for $y \in X$ the $U$-orbit of $y$.

For $k\geq 2$ we have 

\begin{align*}
  \mathrm{dim}\ O_U(p(x^k)) = \dim O_\mf (p(x^k)) = \dim \mf - \dim \mathrm{St}_\mf (p(x^k)) =\\ =n - \dim(\mathrm{Ann}(p(x^k)) = n - \dim\langle x^{n-k+1},\ldots, x^n \rangle = n-k.
\end{align*}
At the same time, $O_U(p(x^k)) \subseteq p(\langle x^k,\ldots, x^n \rangle).$ The last set is irreducible, closed in $X$ and has the dimension $n-k$. So $\overline{O_U(p(x^k))} = p(\langle x^k,\ldots, x^n\rangle).$ It implies that $O_U(p(x^k)) \subseteq \overline{O_U(p(x^l))}$ if and only if $l \leq k.$

All these arguments also work for the case of the degenerate hypersurface corresponding to the $H$-pair $(\BK[x]/(x^{n+1}), \langle x, \ldots, x^{n-2}, x^n\rangle).$ So we obtain the following proposition.

\begin{proposition}
    Let $X$ be the hypersurface corresponding to the $H$-pair $(\BK[x]/(x^{n+1}), \langle x, \ldots, x^{n-1}\rangle)$ or $(\BK[x]/(x^{n+1}), \langle x, \ldots, x^{n-2}, x^n\rangle).$ Then there are exactly~$n$ $U$-orbits $O_0,\ldots, O_{n-1}$ in $X$ with $\dim O_i = i$ and $O_i \subseteq \overline{O_j}$ if and only if $j \geq i$.
\end{proposition}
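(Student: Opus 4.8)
The plan is to reduce both $H$-pairs to a single orbit picture that depends only on the algebra $A = \BK[x]/(x^{n+1})$, and then read off dimensions and closures from the valuation filtration $\mf \supseteq \mf^2 \supseteq \cdots \supseteq \mf^n \supseteq 0$. First I would record the $\mf$-orbits on $\BP(A)$: every nonzero $z \in A$ factors as $z = x^k u$ with $u$ a unit and $0 \le k \le n$, so two nonzero elements are associated exactly when they share the same $x$-adic valuation. Hence the $n+1$ sets $O_{\mf}(p(x^k))$, $k = 0, \dots, n$, are precisely the $\mf$-orbits on $\BP^n$, as already noted above.

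Next I would cut out $X$ and see that the relevant orbit list is the same for both pairs. In the non-degenerate case $U = \langle x, \dots, x^{n-1}\rangle$ the degree of $X$ is $n$ by Theorem \ref{degree}, and by Proposition \ref{complement} the complement of the open $U$-orbit is $\{p(z) \mid z^n \in U\}$. Since $z^n = 0$ whenever $z \in \mf^2$, while an order-one element $z = \alpha x + \cdots$ with $\alpha \neq 0$ has $z^n = \alpha^n x^n \neq 0$ and $x^n \notin U$, this complement is exactly $p(\mf^2)$; together with the open orbit this accounts for $O_{\mf}(p(1)), O_{\mf}(p(x^2)), \dots, O_{\mf}(p(x^n))$, that is, $n$ orbits in total. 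In the degenerate case $U = \langle x, \dots, x^{n-2}, x^n\rangle$ (so $n \ge 3$) the degree drops to $n-1$, because now $\mf^{n-1} \not\subseteq U$ while $\mf^n \subseteq U$, and the same valuation bookkeeping again yields complement $p(\mf^2)$, hence the identical point set and list of $\mf$-orbits. Because there are only finitely many $U$-orbits, Proposition \ref{Uorbits} forces each of these $\mf$-orbits to coincide with a single $U$-orbit: the needed condition $\Ann(x^k) + U = \mf$ holds for every $k \ge 2$ in both cases, since $\Ann(x^k) \supseteq (x^{n-1})$ and $(x^{n-1}) + U = \mf$ is exactly the equality isolated in Lemma \ref{lem1}. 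This gives exactly $n$ $U$-orbits in either case.

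Then I would compute dimensions and closures, both of which are intrinsic to $A$ and therefore insensitive to which admissible $U$ was chosen. For $k \ge 2$ the stabilizer of $p(x^k)$ under $\mf$ equals $\Ann(x^k) = \langle x^{n+1-k}, \dots, x^n\rangle$ by Proposition \ref{Uorbits}, so $\dim O_U(p(x^k)) = n - \dim\Ann(x^k) = n - k$, while the open orbit has dimension $n-1$. Relabelling by dimension, I set $O_{n-1} := O_U(p(1))$ and $O_{n-k} := O_U(p(x^k))$ for $2 \le k \le n$, so that $\dim O_i = i$ for $0 \le i \le n-1$. For the closures, the associates of $x^k$ are exactly the valuation-$k$ elements of $\langle x^k, \dots, x^n\rangle$, forming a dense open subset of the irreducible projective subspace $p(\langle x^k, \dots, x^n\rangle) \cong \BP^{n-k}$; hence $\overline{O_U(p(x^k))} = p(\langle x^k, \dots, x^n\rangle)$, which contains $p(x^l)$ precisely when $l \ge k$. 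Translating through the relabelling gives $O_i \subseteq \overline{O_j}$ iff $j \ge i$, the open orbit $O_{n-1}$ fitting the pattern automatically since $\overline{O_{n-1}} = X$ while no lower-dimensional closure can contain it.

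The computations here are all routine; the only point demanding genuine care is the uniform treatment of the two $H$-pairs. Concretely, one must verify that the degenerate pair still produces complement $p(\mf^2)$, which rests on the degree being $n-1$ together with the equality $(x^{n-1}) + U = \mf$, so that the finite-orbit hypothesis and Proposition \ref{Uorbits} apply in exactly the same way. Once this reduction is in place, the dimension and incidence data depend only on the annihilator chain of $A$ and transfer verbatim between the two cases, yielding the claimed chain of orbits.
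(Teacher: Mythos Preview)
Your proof is correct and follows essentially the same approach as the paper: identify the $\mf$-orbits via the $x$-adic valuation, use Proposition~\ref{complement} to show the complement of the open orbit is $p(\mf^2)$, invoke Proposition~\ref{Uorbits} to collapse $\mf$-orbits to $U$-orbits, and read off dimensions and closures from the filtration $\mf^k$. The only difference is cosmetic: where the paper says ``all these arguments also work for the degenerate case,'' you actually unpack the degree-$(n-1)$ computation and verify $(x^{n-1})+U=\mf$ explicitly, which is a welcome bit of extra care but not a new idea.
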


It is easy to check that there are 4 orbits in $\BP^1\times \BP^1.$ One is two-dimensional and open, two orbits are 1-dimensional and one orbit is a fixed point which is contained in the closures of all others orbits.

\subsection{Smoothness and normality} It is clear that $\BP^1\times \BP^1$ is smooth and normal. By \cite[Proposition 4]{AP} a smooth hypersurface admitting an induced additive action is a non-degenerate quadric. So the only smooth hypersurfaces admitting an induced additive actions with a finite number of orbits are non-degenerate quadrics of rank 3 and 4 in $\BP^2$ and $\BP^3$ respectively. 

To study normality we use Proposition 3 in \cite{ABZ}. Let $X$ be a hypersurface admitting an additive action and $(A,U)$ is the corresponding $H$-pair. Then $X$ is given in $\BP^n$ by the equation 
$$z_0^d\pi(\ln(1+\frac{z}{z_0})) = \sum_{k=1}^d z_0^{d-k}f_k = 0,$$
where $f_k$ is a homogeneous polynomial of degree $k$ and $d$ is the degree of the hypersurface; see Theorem \ref{equation}. Let $f_d = p_1^{a_1}\ldots p_r^{a_r}$, where $p_1,\ldots, p_r$ are distinct coprime irreducible polynomials and $a_i > 0.$ Denote  $\tilde{f}_d = \frac{f_d}{p_1\ldots p_r}.$

\begin{proposition}\cite[Proposition 3]{ABZ}
The hypersurface $X$ is normal if and only if the polynomials $\tilde{f}_d$ and $f_{d-1}$ are coprime.
\end{proposition}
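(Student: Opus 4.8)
The plan is to reduce the question to a dimension count for the singular locus, and then to locate that singular locus explicitly on the hyperplane $\{z_0=0\}$ and read it off from $f_d$ and $f_{d-1}$. Since $X$ is an irreducible hypersurface, it is a complete intersection, hence Cohen--Macaulay, and in particular satisfies Serre's condition $S_2$. By Serre's criterion, $X$ is normal if and only if it is regular in codimension one, i.e. if and only if $\dim \mathrm{Sing}(X)\leq \dim X-2=n-3$. Thus the entire statement is equivalent to the assertion that $\dim\mathrm{Sing}(X)\leq n-3$ exactly when $\tilde f_d$ and $f_{d-1}$ are coprime, and it suffices to compute $\mathrm{Sing}(X)$. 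Throughout I work with the reduced defining equation $F$, so that the Jacobian criterion applies.

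First I would localize the singular locus at infinity. The open $U$-orbit is a homogeneous space for $U\simeq\BG_a^{\,n-1}$, hence smooth, and it is dense and open in $X$, so $\mathrm{Sing}(X)$ is disjoint from it. Every point of the open orbit has the form $p(\exp u)$ with $u\in U$, and since $\exp u=1+(\text{nilpotent})$ its $z_0$-coordinate is nonzero; conversely the affine chart $\{z_0\neq 0\}\cap X$ is cut out by $\pi(\ln(1+z))=0$, which forces $\ln(1+z)\in U$, i.e. $1+z\in\exp(U)$. Hence the affine chart of $X$ is precisely the open orbit, and $\mathrm{Sing}(X)\subseteq\{z_0=0\}\cap X$, where $F$ restricts to $f_d$. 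Writing $F=\sum_{k=1}^{d}z_0^{\,d-k}f_k$, at a point $P=[0:a]$ one computes $\partial_{z_0}F(P)=f_{d-1}(a)$ and $\partial_{z_j}F(P)=\partial_{z_j}f_d(a)$ for $j\geq 1$. Therefore $P\in\mathrm{Sing}(X)$ if and only if $a$ is a singular point of $\{f_d=0\}\subseteq\BP^{n-1}$ and $f_{d-1}(a)=0$; identifying $\{z_0=0\}$ with $\BP^{n-1}$, this gives $\mathrm{Sing}(X)\cong\mathrm{Sing}(\{f_d=0\})\cap\{f_{d-1}=0\}$.

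Next I would isolate the codimension-one part of $\mathrm{Sing}(\{f_d=0\})$. Factoring $f_d=p_1^{a_1}\cdots p_r^{a_r}$, the product rule gives $\partial_{z_j}f_d=\tilde f_d\cdot h_j$, where $h_j=\sum_i a_i(\partial_{z_j}p_i)\prod_{l\neq i}p_l$, so that $\mathrm{Sing}(\{f_d=0\})=\{\tilde f_d=0\}\cup\{h_1=\cdots=h_n=0\}$. A general point of any component $\{p_i=0\}$ is a smooth point of $p_i$ lying off the other components, and there some $\partial_{z_j}p_i$ is nonzero while $\prod_{l\neq i}p_l\neq 0$, so $h_j$ does not vanish; hence the locus $\{h_1=\cdots=h_n=0\}$ contains no component of $\{f_d=0\}$. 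Being contained in $\{f_d=0\}$, it therefore has codimension at least two, i.e. dimension $\leq n-3$. Thus the only piece of $\mathrm{Sing}(\{f_d=0\})$ that can have dimension $n-2$ is the non-reduced locus $\{\tilde f_d=0\}$.

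Finally I would intersect with $\{f_{d-1}=0\}$ and conclude by cases. The part $\{h_1=\cdots=h_n=0\}\cap\{f_{d-1}=0\}$ has dimension $\leq n-3$ and so never obstructs normality. For the remaining part $\{\tilde f_d=0\}\cap\{f_{d-1}=0\}$: if $\tilde f_d$ and $f_{d-1}$ share an irreducible factor, this intersection contains the corresponding hypersurface of $\BP^{n-1}$, whence $\dim\mathrm{Sing}(X)=n-2$ and $X$ is not normal; if they are coprime, the two hypersurfaces meet in pure codimension two, giving dimension $\leq n-3$ and normality. Combining the cases yields that $X$ is normal if and only if $\gcd(\tilde f_d,f_{d-1})=1$. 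I expect the main obstacle to be making the third step fully rigorous, namely verifying that the codimension-one part of $\mathrm{Sing}(\{f_d=0\})$ is \emph{exactly} the non-reduced locus $\{\tilde f_d=0\}$ by controlling the auxiliary locus $\{h_1=\cdots=h_n=0\}$, together with checking that passing to the reduced defining equation does not disturb the gradient computation.
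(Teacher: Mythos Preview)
The paper does not prove this statement: it is quoted verbatim from \cite[Proposition~3]{ABZ} and used as a black box in the normality discussion in Section~\ref{PROP}. So there is no ``paper's proof'' to compare your attempt against.

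That said, your argument is the natural one and is essentially correct. A couple of small points you should tighten. First, you need not ``pass to the reduced defining equation'': since $X$ is the closure of a $\BG_a^{n-1}$-orbit it is irreducible, and by Theorem~\ref{degree} the degree of $X$ equals the degree $d$ of $F$, so $F$ is automatically irreducible (hence reduced). Second, your claim that $\{h_1=\cdots=h_n=0\}\subseteq\{f_d=0\}$ deserves one line: from $h_j=\sum_i a_i(\partial_{z_j}p_i)\prod_{l\neq i}p_l$ one gets $\sum_j z_j h_j=\sum_i a_i(\deg p_i)\,p_1\cdots p_r=d\cdot p_1\cdots p_r$, so the vanishing of all $h_j$ forces $p_1\cdots p_r=0$ and hence $f_d=0$. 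With these two remarks in place, your Jacobian computation at $\{z_0=0\}$ and the decomposition $\partial_{z_j}f_d=\tilde f_d\cdot h_j$ cleanly identify the codimension-one part of $\mathrm{Sing}(X)$ with the common zero locus of $\tilde f_d$ and $f_{d-1}$, and Serre's criterion finishes the job. The edge cases (e.g.\ $\tilde f_d$ constant, or $f_{d-1}\equiv 0$) are consistent with your conclusion once ``coprime'' is read as $\gcd=1$ in $\BK[z_1,\dots,z_n]$.
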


It implies that a hypersurface $\{F = 0\} \subseteq \BP^n$ admitting an induced additive action is normal if and only if the hypersurface $\{F = 0\} \subseteq \BP^{n+1}$ is normal. So it is enough to check normality for the hypersurfaces corresponding to $H$-pairs $(\BK[x]/(x^{n+1}), \langle x,\ldots, x^{n-1}\rangle)$. In this case $d = n$ and we have
$$\tilde{f_d} = (-1)^n\frac{z_1^{n-1}}{n},\ \text{and} \ f_{d-1} = (-1)^{n-1}z_1^{n-2}z_2.$$
Here $z_0, z_1\ldots, z_n$ are coordinate functions on $\BP^n$ corresponding to the basis $1,x,\ldots, x^n$ in $\BK[x]/(x^{n+1}).$ These polynomials are coprime if and only if $n = 2.$ So we obtain the following result.

\begin{proposition}
    \begin{enumerate}
        \item Let $X\subseteq \BP^n$ be a smooth hypersurface admitting an additive action with a finite number of orbits. Then either $n = 2$ and $X = \{z_2z_0 - \frac{1}{2}z_1^2 = 0\}$ or $n = 3$ and $X = \{z_0z_3 - z_1z_2 = 0 \}.$
        \item Let $X\subseteq \BP^n$ be a normal hypersurface admitting an additive action with a finite number of orbits. Then either $n = 2$ and $X = \{z_2z_0 - \frac{1}{2}z_1^2 = 0\}$ or $n = 3$ and $X = \{z_2z_0 - \frac{1}{2}z_1^2 = 0\}$ or $X = \{z_0z_3 - z_1z_2 = 0 \}$. 
     \end{enumerate}
\end{proposition}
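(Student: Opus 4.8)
The plan is to deduce both statements directly from the complete classification already obtained in Theorem~\ref{maintheor} and Corollary~\ref{finalcor}. That classification reduces the problem to a finite, explicit list of hypersurfaces, so all that remains is to decide, for each listed $H$-pair, whether its hypersurface is smooth and whether it is normal. Thus the two external inputs already recorded above---\cite[Proposition~4]{AP} for smoothness and \cite[Proposition~3]{ABZ} for normality---will do the work, and my task is mostly to match each $H$-pair with the geometry of the corresponding hypersurface.

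For part (1) I would invoke \cite[Proposition~4]{AP}, by which a smooth hypersurface carrying an induced additive action is necessarily a non-degenerate quadric; hence it suffices to single out the quadrics among the finitely-many-orbit hypersurfaces and decide which have maximal rank. By Theorem~\ref{degree} a member of the family $(\BK[x]/(x^{n+1}), U_i)$ has degree $i$, so the degree-$2$ cases are exactly $(\BK[x]/(x^3), \langle x\rangle)$ in $\BP^2$ and $(\BK[x]/(x^4), \langle x, x^3\rangle)$ in $\BP^3$, together with $(\BK[x,y]/(x^2,y^2), \langle x,y\rangle)$ in $\BP^3$. These give, respectively, the rank-$3$ conic, the degenerate rank-$3$ quadric (a projective cone, singular at its vertex), and the rank-$4$ quadric $\BP^1\times\BP^1$. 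A quadric in $\BP^n$ is smooth exactly when its rank is $n+1$, so the smooth ones are precisely $\{z_0z_2 - \frac{1}{2}z_1^2 = 0\}\subseteq\BP^2$ and $\{z_0z_3 - z_1z_2 = 0\}\subseteq\BP^3$.

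For part (2) I would combine the normality criterion \cite[Proposition~3]{ABZ} with the cone reduction recorded above: since the criterion depends only on $\tilde f_d$ and $f_{d-1}$, which are unchanged on passing to the projective cone, the degenerate hypersurface $Y_n$ (the cone over $X_{n-1}$) is normal if and only if $X_{n-1}$ is. It therefore suffices to decide normality for the non-degenerate family $(\BK[x]/(x^{n+1}), \langle x,\ldots,x^{n-1}\rangle)$, where the computation above gives $\tilde f_d = (-1)^n z_1^{n-1}/n$ and $f_{d-1} = (-1)^{n-1} z_1^{n-2} z_2$; these are coprime exactly when $n = 2$. Hence $X_n$ is normal iff $n=2$ and $Y_n$ is normal iff $n=3$, while $\BP^1\times\BP^1$ is smooth and so normal. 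Collecting these yields the conic in $\BP^2$ and, in $\BP^3$, the cone $\{z_0z_2 - \frac{1}{2}z_1^2 = 0\}$ and the quadric $\{z_0z_3 - z_1z_2 = 0\}$, as stated.

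There is no deep obstacle here, as the heavy lifting (the two cited propositions and the generating-function computation of $f_d,f_{d-1}$) is already in place; the care required is purely in the bookkeeping. Specifically, one must read off the degree, hence the quadric rank, of each $H$-pair correctly in order to separate the smooth full-rank quadrics from the singular cone, and one must apply the cone reduction with the correct index shift, so that normality of $Y_n$ is tied to $X_{n-1}$ rather than to $X_n$. The case $\BP^1\times\BP^1$ lies outside the $\BK[x]/(x^{n+1})$ family and so must be handled separately in both parts, using its smoothness to conclude both smoothness and normality at once.
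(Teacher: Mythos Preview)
Your proposal is correct and follows essentially the same route as the paper: both parts reduce to the classification in Theorem~\ref{maintheor}/Corollary~\ref{finalcor}, then invoke \cite[Proposition~4]{AP} to single out the full-rank quadrics for smoothness, and \cite[Proposition~3]{ABZ} together with the cone reduction and the explicit computation of $\tilde f_d$ and $f_{d-1}$ for normality. The bookkeeping you flag (degree/rank of each $H$-pair, the index shift $Y_n\leftrightarrow X_{n-1}$, and the separate handling of $\BP^1\times\BP^1$) is exactly what the paper does as well.
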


\subsection{The number of additive actions} By \cite[Theorem 2.32]{AZ} a non-degenerate hypersurface in $\BP^n$ admits at most one induced additive action. At the same time, it was proven in \cite{Be} that if a degenerate hypersurface in $\BP^n$ admits an induced additive action then it admits at least two non-isomorphic induced additive actions. Here we will prove that if a degenerate hypersurface in $\BP^n$ admits an induced additive action with a finite number of orbits then it admits exactly two induced additive actions. 

Let $X = \{ F = 0\} \subseteq \BP(V) = \BP^n$ be a degenerate hypersurface, where $F$ is a homogeneous polynomial of degree $d$ and $V$ is a vector space. Suppose $(A,U)$ is the corresponding $H$-pair. Note that $A \simeq V$ as a vector space. The polynomial $F$ corresponds to a $d$-linear form $\overline{F}:V\times \ldots \times V \to \BK.$ We denote  $J = \mathrm{Ker}\ F.$ Then by \cite[Lemma 2.19]{AZ} $J$ is an ideal in $A$ contained in $U$ and $J$ is the unique maximal ideal among all ideals in $A$ contained in~$U$. Then $F$ defines a polynomial $\widetilde{F}$ on $\BP(V/J)$ and the hypersurface $\{\widetilde{F} = 0\} \subseteq \BP(V/J)$ is non-degenerate. Moreover, additive action on the hypersurface $\{F = 0\}$ induces an addititve action on $\{\widetilde{F} = 0\}$ which corresponds to the $H$-pair $(A/J, U/J);$ see \cite[Corollary 2.23]{AZ}. 

Now we assume that $(A,U) = (\BK[x]/(x^{n+1}), \langle x, x^2, \ldots, x^{n-2}, x^n\rangle)$. Then $J = (x^n)$ and $(A/J, U/J) = (\BK[x]/(x^n), \langle x, \ldots, x^{n-2}\rangle).$ The hypersurface corresponding to the $H$-pair $(A/J, U/J)$ is non-degenerate. So there is only one additive action on it.

Consider an $H$-pair $(B, W)$ which corresponds to an induced additive action on the hypersurface $\{F = 0\}.$ Then there is  one-dimensional ideal $P$ in $B$ with $P\subseteq W$ and $(B/P, W/P) = (\BK[x]/(x^n), \langle x, \ldots, x^{n-2}\rangle)$. Since there are no non-zero ideals in $\BK[x]/(x^n)$ contained in $\langle x, \ldots, x^{n-2}\rangle$ in this case $P$ will be automatically maximal among all ideals in $B$ contained in $W$.

Let $y$ be a preimage of $x$ in $W$ with respect to the canonical projection $B \to B/P$. Denote by $z$ a basis in $P$. Then $z\in \mathrm{Soc}(B).$

The image of $y^n $ in $B/P$ is zero  so $y^n \in \langle z \rangle.$ If $y^n \neq 0$ then up to a scalar $y^n = z.$ In this case $B = \BK[y]/(y^{n+1})$ and $W = \langle y, \ldots, y^{n-2}, y^n\rangle$. If $y^n = 0$ then $B = \BK[y,z]/(y^n, z^2, zy)$ and $W = \langle y, \ldots, y^{n-2}, z\rangle$. So we have proved the following proposition.

\begin{proposition}
    Let $X$ be a hypersurface in $\BP^n$ which corresponds to the $H$-pair $(\BK[x]/(x^{n+1}), \langle x, \ldots, x^{n-2}, x^n\rangle)$. Then there are exactly two induced additive actions on $X$. One corresponds to the $H$-pair $(\BK[x]/(x^{n+1}), \langle x, \ldots, x^{n-2}, x^n\rangle)$ and the other one corresponds to the $H$-pair $(\BK[y,z]/(y^n, z^2, zy),\langle y, \ldots, y^{n-2}, z\rangle)$. 
\end{proposition}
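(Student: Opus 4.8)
The plan is to classify, up to isomorphism, all $H$-pairs $(B,W)$ whose associated hypersurface (via Proposition \ref{APprop}) equals $X$, and to show that there are exactly two of them. Since $X$ is degenerate, the results of \cite{Be} already guarantee at least two non-isomorphic induced additive actions, so the real content lies in the upper bound. The governing invariant will be the non-degenerate reduction of $X$. First I would record, for the given pair $(A,U) = (\BK[x]/(x^{n+1}), \langle x, \ldots, x^{n-2}, x^n\rangle)$, that the kernel $J = \Ker \overline{F}$ of the associated $d$-linear form is the maximal ideal contained in $U$; here $J = (x^n)$, so that $(A/J, U/J) = (\BK[x]/(x^n), \langle x, \ldots, x^{n-2}\rangle)$ and the corresponding hypersurface $\{\widetilde F = 0\} \subseteq \BP(V/J)$ is non-degenerate. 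This uses \cite[Lemma 2.19]{AZ} and \cite[Corollary 2.23]{AZ}. The key point is that $\{\widetilde F = 0\}$ is intrinsic to $X$, depending only on $X$ and not on the chosen $H$-pair, so every $H$-pair realising $X$ must reduce to it.

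Next I would take an arbitrary $H$-pair $(B,W)$ giving $X$, with maximal ideal $\mf$, and apply the same reduction. Its kernel $P := \Ker \overline{F}$ is a nonzero ideal contained in $W$, and $(B/P, W/P)$ is a non-degenerate $H$-pair defining the same non-degenerate hypersurface $\{\widetilde F = 0\}$. Because a non-degenerate hypersurface carries at most one induced additive action (\cite[Theorem 2.32]{AZ}), this forces $(B/P, W/P) \cong (\BK[x]/(x^n), \langle x, \ldots, x^{n-2}\rangle)$. Comparing dimensions, $\dim_\BK B = n+1$ while $\dim_\BK B/P = n$, so $P$ is one-dimensional; a one-dimensional ideal is automatically annihilated by $\mf$ (otherwise $\mf P = P$ and Nakayama would force $P = 0$), hence its generator $z$ lies in $\Soc(B)$.

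Then I would reconstruct $(B,W)$ from this extension data. Lifting the generator $x$ of $B/P$ to an element $y \in W$, the relation $x^n = 0$ in $B/P$ gives $y^n \in P = \langle z\rangle$, which splits into two cases. If $y^n \neq 0$, rescaling $z$ yields $y^n = z$, whence $B \simeq \BK[y]/(y^{n+1})$ and $W = \langle y, \ldots, y^{n-2}, y^n\rangle$, recovering the original pair. If $y^n = 0$, then $B \simeq \BK[y,z]/(y^n, z^2, zy)$ with $W = \langle y, \ldots, y^{n-2}, z\rangle$, the second pair. These are genuinely distinct, since the two algebras are non-isomorphic: for instance $\dim \Soc(\BK[y]/(y^{n+1})) = 1$, whereas $\dim \Soc(\BK[y,z]/(y^n,z^2,zy)) = 2$. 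Finally one checks, either through Theorem \ref{equation} or through the cone description (as $P$ cuts out a single vertex point over $\{\widetilde F = 0\}$), that both pairs do produce the same $X$, confirming exactly two actions.

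I expect the main obstacle to be the reduction-and-uniqueness step: showing rigorously that every $H$-pair for $X$ reduces to one fixed non-degenerate pair. This rests on the non-triviality of \cite[Theorem 2.32]{AZ} together with the fact that the non-degenerate reduction is an invariant of the hypersurface rather than of the chosen action. Verifying this carefully, so that $P$ really is the maximal ideal contained in $W$ and the reduction is canonical, is the delicate part, whereas the subsequent case analysis reconstructing $B$ from a single one-dimensional socle extension is routine.
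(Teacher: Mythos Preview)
Your proposal is correct and follows essentially the same route as the paper: compute the kernel ideal $J=(x^n)$ to pass to the non-degenerate reduction $(\BK[x]/(x^n),\langle x,\ldots,x^{n-2}\rangle)$, invoke uniqueness of the induced additive action on the non-degenerate hypersurface so that any competing $H$-pair $(B,W)$ has a one-dimensional ideal $P\subseteq W$ with the same quotient, and then lift the generator to split into the two cases $y^n\neq 0$ and $y^n=0$. The only cosmetic differences are that you justify $\dim P=1$ by a dimension count and $P\subseteq\Soc(B)$ via Nakayama, whereas the paper observes directly that $P$ is maximal among ideals of $B$ contained in $W$ because $\BK[x]/(x^n)$ has no nonzero ideal inside $\langle x,\ldots,x^{n-2}\rangle$; and you add the explicit check that the two resulting pairs are non-isomorphic and both realise $X$, which the paper leaves implicit.
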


It is not difficult to describe the additive action corresponding to the $H$-pair $(\BK[y,z]/(y^n, z^2, zy),\langle y, \ldots, y^{n-2}, z\rangle)$. Let $X_{n-1}$ be the hypersurface in $\BP^{n-1}$ corresponding to the $H$-pair $(\BK[y]/(y^n), \langle y, y^2,\ldots, y^{n-2} \rangle)$. We denote by 
$z_0,\ldots, z_{n-1}$ the homogeneous coordinates on $\BP^{n-1} $ corresponding to the basis $1,y,\ldots, y^{n-1}$. Then the element 
$$(s_1, \ldots, s_{n-2}) =  s_1y + s_2y^2 + \ldots + s_{n-2}y^{n-2} \in U$$ 
acts on $X_{n-1}$ in the following way
$$(s_1,\ldots, s_{n-2})\circ [z_0:\ldots : z_{n-1}] = [z_0 : z_1 + g_1: \ldots :z_{n-1} + g_{n-1}],$$
where $g_1,\ldots, g_{n-1}$ are polynomials  in variables $z_0,\ldots, z_n, s_1,\ldots, s_{n-1}.$ This action is linear, so the polynomials $g_1, \ldots, g_{n-1}$ are homogenous polynomials of degree 1 in variables $z_0,\ldots, z_{n-1}$. Moreover, one can check that the polynomial $g_i$ does not depend on $z_i,\ldots, z_n.$

Suppose that $X_{n-1}$ is given in $\BP^{n-1}$ by a homogeneous polynomial $F$. Then the hypersurface $Y_n$ corresponding to the $H$-pair $(\BK[y,z]/(y^n, z^2, zy),\langle y, \ldots, y^{n-2}, z\rangle)$ is given in $\BP^n$ by the same polynomial $F$ and the corresponding additive action is given by the following formula:

$$(s_1,\ldots, s_{n-2}, s_{n-1})\circ [z_0:\ldots : z_{n-1}:z_n] = [z_0 : z_1 + g_1: \ldots :z_{n-1} + g_{n-1}:z_n + s_{n-1}z_0],$$
where $z_0,\ldots, z_n$ are homogeneous coordinates on $\BP^n$ corresponding to the basis $1, y, y^2,\ldots, y^{n-1}, z$ of $\BK[y,z]/(y^n,z^2,zy)$ and
$$(s_1,\ldots, s_{n-1}) = s_1y+s_2y^2 + \ldots + s_{n-2}y^{n-2} + s_{n-1}z \in U.$$

We denote by $O_i$ the orbit in $X_{n-1}$ of dimension $i$. Then the open orbit $O_{n-2}$ in $X_{n-1}$ is the set
$$O_{n-2} = \{[z_0:\ldots :z_{n-1}] \in \BP^{n-1} \mid F(z_0,\ldots, z_{n-1}) = 0\ \text{and} \ z_0 \neq 0\}.$$

Then the open orbit $O$ in $Y_n$ is the set
$$O = \{[z_0:\ldots :z_{n-1}:z_n] \in \BP^{n-1} \mid F(z_0,\ldots, z_{n-1}) = 0\ \text{and} \ z_0 \neq 0\}.$$

When $i<n-2$ the orbit $O_i$ in $X_{n-1}$ is the orbit of a point 
$P_i = [0: 0 : \ldots 1 : 0 : \ldots 0]$ where 1 stands at the coordinate $z_{n-i-1}.$ There are infinitely many orbits of dimension $i$ in $Y_{n}$. They are orbits of the points 
$$P_{i,c} = [0: 0 : \ldots 0: 1 : 0 : \ldots 0: c]$$ 
(again, 1  stands at the coordinate $z_{n-i-1}).$ If the closure $\overline{O_i}$ in $X_{n-1}$ is given by the set of homogeneous polynomials 
$$F_{i,1},\ldots, F_{i, r(i)}$$ then the closure of the orbit of $P_{i, c}$ in $Y_n$ is given by the set of polynomials 
$$F_{i,1},\ldots, F_{i, r(i)}, z_n - cz_{n-i-1}.$$
When $i>0$ the closure of the orbit of $P_{i,c}$ contains the orbit of $P_{i,c}$ and the orbits $O_0,\ldots, O_{i-1}.$ Here, we assume that $X_{n-1}$ is the subset of $Y_n$ which is given by the equation $z_n = 0.$

\subsection{Limit points of one-parameter subgroups}\label{LP} Let $X$
 be a complete variety and $\alpha:\mathbb{G}_a^n \times X \to X$ be an additive action on $X$. Let $O$ be the open orbit. We say that the additive action $\alpha$ satisfies \textbf{OP}-condition (one-parameter subgroups condition) if for every point $x\in X$ there is a point $y\in O$ and a one-dimensional subgroup $S\subseteq \mathbb{G}_a^n$ such that $x\in \overline{Sy}.$

 In \cite{CC} normal complete varieties with an additive action with a finite number of orbits and \textbf{OP}-condition were described. More precisely, the following holds. 

 \begin{theorem}\cite[Theorem A]{CC}
     Let $X$ be a complete variety and $\alpha: \mathbb{G}_a^n\times X \to X$ is an additive action. Then the following conditions are equivalent.
    \begin{enumerate}
        \item There are finitely many orbits on $X$ with respect to $\alpha$ and $\alpha$ satisfies \textbf{OP}-condition.
        \item The variety $X$ is a matroid Schubert variety and $\alpha$ is the corresponding additive action on~$X$.
     \end{enumerate}
 \end{theorem}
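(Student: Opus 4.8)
The plan is to prove the two implications separately, using the explicit model of a matroid Schubert variety as the target of the reconstruction. Recall the construction: from a realization of a loopless rank-$n$ matroid $M$ as a linear subspace $V \simeq \BK^n$ of a coordinate space $\BK^k$, one embeds $\BK^k$ coordinatewise into $(\BP^1)^k$ and takes the closure of $V$; this yields a complete variety $Y_M$ of dimension $n$ on which the group $V \simeq \BG_a^n$ acts by coordinatewise translation, with open orbit $V$.

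For (2) $\Rightarrow$ (1) I would begin from the fact that $Y_M$ admits an affine paving whose cells are indexed by the flats of $M$. First I would verify that these cells coincide with the $\BG_a^n$-orbits: the open cell is the open orbit $V$, and along the cell attached to a flat $F$ the group acts transitively on the \emph{finite} coordinates while the coordinates sent to $\infty \in \BP^1$ stay frozen. As a matroid has finitely many flats, this gives finitely many orbits. For the \textbf{OP}-condition I would produce, for each boundary point $x$ in the cell of a flat $F$, a one-dimensional subgroup $S \subseteq \BG_a^n$ and a point $y \in V$ with $x = \lim_{t \to \infty} s(t)\cdot y$: here $S$ is chosen so that $s(t)\cdot y$ escapes to $\infty$ exactly in the coordinates outside $F$, degenerating $y$ onto the cell of $F$.

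The substantive direction is (1) $\Rightarrow$ (2). I would fix the open orbit $O$, identify it with the acting group $\BG_a^n = V$ by choosing a base point, and read the combinatorics off the boundary $X \setminus O$. Since $X$ is complete, for each line $\ell \subseteq V$ (one-dimensional subgroup $S$) and each $y \in O$ the map $t \mapsto s(t)\cdot y$ extends over $\BP^1$, so $\lim_{t\to\infty} s(t)\cdot y$ exists in the boundary; the \textbf{OP}-condition guarantees every boundary point is obtained in this way. Using finiteness of orbits I would show that there are only finitely many essentially distinct escape behaviours and that these assemble into a finite family of hyperplanes of $V$, that is, a matroid $M$. The orbit closures should then reproduce the flats of $M$ together with their incidences, after which I would build a $\BG_a^n$-equivariant morphism $X \to (\BP^1)^k$ and check that its image is $Y_M$ and that it is an isomorphism.

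The main obstacle will be precisely this last step: extracting a genuine matroid from the limit data and proving that the equivariant map to $(\BP^1)^k$ is an isomorphism onto $Y_M$. The \textbf{OP}-condition is what forces the boundary to be built from one-parameter limits in a coordinatewise manner rather than in some more exotic fashion, and it cannot be dropped, since there are additive actions with finitely many orbits that are not matroid Schubert varieties. The delicate points are to confirm that the finitely many escape behaviours satisfy the matroid closure axioms, to control how low-dimensional orbits lie in the closures of higher ones so that the entire lattice of flats is recovered, and finally to rule out extra identifications in the limit that would realize $X$ only as a proper equivariant image of $Y_M$ instead of $Y_M$ itself.
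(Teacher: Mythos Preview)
The paper does not contain a proof of this statement. The theorem is quoted verbatim as \cite[Theorem A]{CC} and is used as a black box: immediately after stating it the authors write ``One can find the definition of a matroid Schubert variety in the introduction of \cite{CC}. In this section we check what additive actions on projective hypersurfaces with a finite number of orbits satisfy \textbf{OP}-condition.'' They then proceed to test their short list of hypersurfaces against the \textbf{OP}-condition, never revisiting the proof of the quoted equivalence. So there is nothing in the present paper to compare your proposal against.

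As a standalone sketch of Crowley's argument your outline is reasonable in spirit for the direction $(2)\Rightarrow(1)$: the cells of $Y_M$ really are the $\BG_a^n$-orbits indexed by flats, and the one-parameter degenerations you describe do hit every cell. For $(1)\Rightarrow(2)$, however, the passage ``using finiteness of orbits I would show that there are only finitely many essentially distinct escape behaviours and that these assemble into a finite family of hyperplanes of $V$'' is precisely the content of the theorem and is not an argument; you have named the key steps but not indicated any mechanism for carrying them out, and you yourself flag every one of them as a ``delicate point'' or ``main obstacle.'' If you intend to supply an independent proof rather than cite \cite{CC}, you would need to actually construct the hyperplane arrangement (for instance, from the codimension-one orbit closures) and prove the equivariant map to $(\BP^1)^k$ is an isomorphism, neither of which is sketched here.
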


 One can find the definition of a matroid Schubert variety in the introduction of\cite{CC}. In this section we check what additive actions on projective hypersurfaces with a finite number of orbits satisfy \textbf{OP}-condition.

 Suppose $X$ is a complete variety and $\alpha:\mathbb{G}_a^n \times X\to X$ is an effective additive action on $X$ with an open orbit $O$. Then $O\simeq \mathbb{G}_a^n$ and we can assume that the group $\mathbb{G}_a^n$ is a subset in $X$. Then $\alpha$ satisfies \textbf{OP}-condition if and only if for every $\mathbb{G}_a^n$-orbit $O'$ there is a one-dimensional subgroup $S\subseteq \mathbb{G}_a^n$ such that $\overline{S}\cap O' \neq \emptyset$.

 Now we assume that $X$ is the projective hypersurface corresponding to the $H$-pair $(A, U) = (\BK[x]/(x^{n+1}), \langle x, \ldots, x^{n-1}\rangle)$ and $\alpha$ is the respective additive action. Consider a one-dimensional subgroup $S = \langle \alpha_1x + \ldots + \alpha_{n-1}x^{n-1}\rangle \subseteq U.$ The image of $S$ in $X$ is the subset $p(\mathrm{exp}(S)).$ Let $z_0,\ldots, z_n$ be the coordinates in $\BP(A) = \BP^{n}$ corresponding to the basis $1,x,\ldots, x^n$ of $A$. We have 
 $$\mathrm{exp}(S) = \{1 + t(\alpha_1x+\ldots +\alpha_{n-1}x^{n-1}) + \ldots + \frac{t^{n}\alpha_{1}^{n}x^{n}}{n!} \mid t \in \BK\}$$
 and $p(\mathrm{exp}(S))$ is a set of the form $\{[g_0(t): \ldots g_n(t)] \mid t\in \BK\}$, where $g_i$ is a  polynomial of degree $i$.

 If we consider a homogeneous polynomial $F(z_0,\ldots, z_n)$ which is equal to zero on $\overline{\exp(S)}$ then the polynomial $f(t) = F(g_0(t), \ldots, g_n(t))$ is zero. It implies that there are no monomials of the form $z_n^k$ in $F$. So $F(0,\ldots, 0, 1) = 0$. Therefore, $[0:\ldots : 1] \in \overline{\exp(S)}.$

 The set $\overline{\exp(S)}$ is the union of $\exp(S)$ and $[0:\ldots:0:1]$. Indeed, the group $S\simeq \BG_a$ acts on $\overline{\exp(S)}$ and $\exp(S)$ is an open orbit in $\overline{\exp(S)}$. So $\overline{\exp(S)} \setminus \exp(S)$ is a finite set of fixed points. But by \cite[Corollary 1]{BB} the set of $\BG_a$-fixed points on a complete variety is connected. So there is only one point in $\overline{\exp(S)}\setminus\exp(S).$ 

 Therefore, for any one-dimensional subgroup $S$ in $U$ the set $\overline{\exp(S)}$ intersects only two $U$-orbits: the open orbit and the fixed point $p(x^n).$ Hence, \textbf{OP}-condition holds only when $n=2.$ The same arguments show that \textbf{OP}-condition never holds for the $H$-pairs $(\BK[x]/(x^{n+1}), \langle x, \ldots, x^{n-2}, x^n\rangle)$ and it is easy to check that $\textbf{OP}$-condition holds for the additive action on $\BP^1\times \BP^1$. Thus, we obtain the following proposition.

 \begin{proposition}
     Let $X$ be a projective hypersurface which admits an additive action with a finite number of orbits and satisfying \textbf{OP}-condition. Then $X$ is either $\{z_2z_0 - \frac{1}{2}z_1^2 = 0 \}\subseteq \BP^2 $  or $\{z_0z_3 - z_1z_2 = 0\}\subseteq \BP^3.$ 
 \end{proposition}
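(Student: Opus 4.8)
The plan is to reduce to finitely many explicit cases via the classification and then to test the \textbf{OP}-condition on each, using the limit-point computation developed above. First I would apply Theorem~\ref{maintheor}: every induced additive action with a finite number of orbits corresponds to an $H$-pair isomorphic to one of $(\BK[x]/(x^{n+1}),\langle x,\ldots,x^{n-1}\rangle)$, giving the non-degenerate hypersurface $X_n$ of degree $n$; $(\BK[x]/(x^{n+1}),\langle x,\ldots,x^{n-2},x^n\rangle)$, giving the degenerate cone $Y_n$ (which occurs only for $n\geq 3$); or $(\BK[x,y]/(x^2,y^2),\langle x,y\rangle)$, giving the Segre quadric $\BP^1\times\BP^1\subseteq\BP^3$. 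Thus it suffices to decide, for each of these three families, whether the associated action satisfies the reformulated \textbf{OP}-condition, namely that every $U$-orbit is met by the closure $\overline{\exp(S)}$ of some one-dimensional subgroup $S\subseteq U$.

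For the two families with $A\simeq\BK[x]/(x^{n+1})$ the decisive input is the computation carried out earlier in this subsection: for every one-dimensional $S\subseteq U$ the set $\overline{\exp(S)}$ is the union of $\exp(S)$ and the single point $p(x^n)=[0:\cdots:0:1]$. The two ingredients are that the defining equation of $X$ contains no monomial $z_n^k$, so that $p(x^n)$ lies on $X$ and hence on $\overline{\exp(S)}$, and that $\overline{\exp(S)}\setminus\exp(S)$ is a single $S$-fixed point by the connectedness of the $\BG_a$-fixed locus on a complete variety (\cite[Corollary~1]{BB}). Consequently $\overline{\exp(S)}$ meets only the open orbit and the fixed point $p(x^n)$, regardless of the choice of $S$. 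Since the orbits of $X_n$, and likewise of $Y_n$, are exactly $O_0,\ldots,O_{n-1}$ with $\dim O_i=i$, the \textbf{OP}-condition can hold only when $X$ has no orbits besides the open one and a single fixed point, that is, only when $n=2$. For $n\geq 3$ the intermediate orbits $O_1,\ldots,O_{n-2}$ are unreachable, so neither $X_n$ nor the cone $Y_n$ qualifies.

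It remains to collect the surviving cases. The case $n=2$ of the first family is the non-degenerate quadric $\{z_0z_2-\frac{1}{2}z_1^2=0\}\subseteq\BP^2$, where the only orbits are the open one and the fixed point $p(x^2)$, both met by $\overline{\exp(S)}$, so the \textbf{OP}-condition holds. For the third family I would verify directly, as indicated above, that the additive action on $\BP^1\times\BP^1=\{z_0z_3-z_1z_2=0\}\subseteq\BP^3$ satisfies the \textbf{OP}-condition; here the different orbit structure makes the check straightforward but separate from the argument for the cyclic algebra. This yields exactly the two hypersurfaces in the statement. The step I expect to be the crux is the fixed-point computation showing that one-parameter subgroup closures can never reach the intermediate orbits of $X_n$ and $Y_n$; once that is in hand, the rest is bookkeeping over the short list produced by Theorem~\ref{maintheor}.
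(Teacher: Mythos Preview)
Your plan coincides with the paper's own argument: reduce via Theorem~\ref{maintheor} to the three families, invoke the limit-point computation for the cyclic-algebra cases to rule out $n\geq 3$, and check $\BP^1\times\BP^1$ directly.

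One genuine slip in your summary, however, would break the argument if left as written. You say that ``the defining equation of $X$ contains no monomial $z_n^k$, so that $p(x^n)$ lies on $X$ and hence on $\overline{\exp(S)}$''. That $p(x^n)\in X$ does \emph{not} imply $p(x^n)\in\overline{\exp(S)}$; membership in the ambient hypersurface says nothing about membership in the closure of a specific curve on it. The paper's actual reasoning concerns the homogeneous ideal of the curve $\overline{\exp(S)}$ itself: for \emph{any} homogeneous $F$ vanishing on $\exp(S)$ one has $F(g_0(t),\ldots,g_n(t))\equiv 0$ with $\deg g_i=i$, and since a monomial $z_n^k$ would contribute the unique top-degree term in $t$, its coefficient must vanish, whence $F(0,\ldots,0,1)=0$. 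Thus $[0:\cdots:0:1]$ lies in the common zero locus of the ideal of $\overline{\exp(S)}$, which is exactly what is needed. Once you replace your sentence with this argument, the rest of your outline matches the paper and is correct.
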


 Since $\BP^1$ and $\BP^1\times \BP^1$ are normal varieties, we obtain the following corollary.

 \begin{corollary}
     Let $X$ be a matroid Schubert variety which can be embedded as a hypersurface in a projective space. Then $X$ is $\BP^1$ or $\BP^1\times \BP^1$.
 \end{corollary}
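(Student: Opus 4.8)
The plan is to reduce the statement to the Proposition that immediately precedes it, which lists the projective hypersurfaces admitting an induced additive action with finitely many orbits and the \textbf{OP}-condition. First I would apply \cite[Theorem A]{CC} in the direction $(2)\Rightarrow(1)$: since $X$ is a matroid Schubert variety, it is a complete variety equipped with an additive action $\alpha$ having finitely many orbits and satisfying the \textbf{OP}-condition. Fixing the given realization of $X$ as a hypersurface in $\BP^n$, so that $\dim X = n-1 \ge 1$ and hence $n\ge 2$, the remaining task is to show that $\alpha$ is \emph{induced}; once this is known, the preceding Proposition applies verbatim and pins $X$ down.

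The crux is therefore the passage from the abstract action $\alpha$ to an induced one, and this is the step I expect to require the most care. Here I would use the lifting criterion recalled in the introduction: on a normal, linearly normal projective variety every additive action extends to the ambient projective space. Matroid Schubert varieties are normal, so $X$ is normal; and for a hypersurface in $\BP^n$ with $n\ge 2$ the vanishing $H^1(\BP^n,\CO(1-d))=0$ makes the restriction $H^0(\BP^n,\CO(1))\to H^0(X,\CO_X(1))$ surjective, so $X$ is linearly normal in this embedding. Combining the two facts, $\alpha$ lifts to $\BP^n$ and is an induced additive action. The delicate point is precisely that the classification of the preceding Proposition is phrased through $H$-pairs, hence governs induced actions only, so one genuinely needs both normality of $X$ and linear normality of the embedding in order to bring $\alpha$ into that framework.

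Granting that $\alpha$ is induced, the preceding Proposition forces $X$ to be $\{z_0z_2-\tfrac12 z_1^2=0\}\subseteq\BP^2$ or $\{z_0z_3-z_1z_2=0\}\subseteq\BP^3$; by Corollary~\ref{finalcor} these are exactly the degree-$2$ Veronese image of $\BP^1$ and the Segre image of $\BP^1\times\BP^1$, whence $X\cong\BP^1$ or $X\cong\BP^1\times\BP^1$. Finally, to see that the list is nonvacuous and that both cases genuinely occur, I would run \cite[Theorem A]{CC} in the direction $(1)\Rightarrow(2)$: both $\BP^1$ and $\BP^1\times\BP^1$ are normal complete varieties carrying the additive actions of the Proposition with finitely many orbits and the \textbf{OP}-condition, so each is a matroid Schubert variety. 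This is where the normality of the two output varieties, recorded just before the statement, enters.
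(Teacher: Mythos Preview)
Your proposal is correct and follows the same overall line as the paper: deduce the corollary from the preceding Proposition by invoking \cite[Theorem~A]{CC} in both directions. The paper's own justification is the single sentence ``Since $\BP^1$ and $\BP^1\times\BP^1$ are normal varieties, we obtain the following corollary,'' which uses normality only to confirm (via $(1)\Rightarrow(2)$) that both candidate varieties genuinely are matroid Schubert varieties.

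Your write-up is in fact more careful than the paper's on one point. The preceding Proposition is stated for an arbitrary additive action, but its proof only analyzes the \emph{induced} actions coming from the $H$-pairs of Theorem~\ref{maintheor}. You correctly flag this and close the gap by observing that matroid Schubert varieties are normal and that any hypersurface in $\BP^n$ with $n\ge 2$ is linearly normal (via the vanishing of $H^1(\BP^n,\CO(1-d))$), so the lifting criterion from the introduction applies and the action is induced. The paper leaves this passage implicit; your argument makes it explicit and is the cleaner of the two.
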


\end{document}